\DeclareMathOperator{\sHom}{\mathscr{H}\text{\kern -3pt {\calligra\large om}}\,}
\DeclareMathOperator{\sRHom}{{R}\mathscr{H}\text{\kern -3pt {\calligra\large om}}\,}
\DeclareMathOperator{\sExt}{\mathscr{E}\text{\kern -3pt {\calligra\large xt}}\,}
\DeclareMathOperator{\sQuot}{\mathscr{Q}\text{\kern -3pt {\calligra\large uot}}\,}
\patchcmd{\@settitle}{\uppercasenonmath\@title}{}{}{}
\newcommand{\nc}{\newcommand}
\newtheorem{theorem}[subsection]{Theorem}
\newtheorem{proposition}[subsection]{Proposition}
\newtheorem{lemma}[subsection]{Lemma}
\newtheorem{corollary}[subsection]{Corollary}
\newtheorem{definition}[subsection]{Definition}
\newtheorem{claim}[subsection]{Claim}
\newtheorem{example}[subsection]{Example}
\nc{\fa}{{\mathfrak{a}}}
\nc{\fb}{{\mathfrak{b}}}
\nc{\fg}{{\mathfrak{g}}}
\nc{\fh}{{\mathfrak{h}}}
\nc{\fj}{{\mathfrak{j}}}
\nc{\fn}{{\mathfrak{n}}}
\nc{\fm}{{\mathfrak{m}}}
\nc{\fu}{{\mathfrak{u}}}
\nc{\fp}{{\mathfrak{p}}}
\nc{\fr}{{\mathfrak{r}}}
\nc{\ft}{{\mathfrak{t}}}
\nc{\fsl}{{\mathfrak{sl}}}
\nc{\fgl}{{\mathfrak{gl}}}
\nc{\hsl}{{\widehat{\mathfrak{sl}}}}
\nc{\hgl}{{\widehat{\mathfrak{gl}}}}
\nc{\hg}{{\widehat{\mathfrak{g}}}}
\nc{\chg}{{\widehat{\mathfrak{g}}}{}^\vee}
\nc{\hn}{{\widehat{\mathfrak{n}}}}
\nc{\chn}{{\widehat{\mathfrak{n}}}{}^\vee}
\nc{\Mod}{{\textrm{Mod}}}
\nc{\wGL}{{\widehat{GL}^+}}
\nc{\BA}{{\mathbb{A}}}
\nc{\BC}{{\mathbb{C}}}
\nc{\BG}{{\mathbb{G}}}
\nc{\BK}{{\mathbb{K}}}
\nc{\BM}{{\mathbb{M}}}
\nc{\BN}{{\mathbb{N}}}
\nc{\BF}{{\mathbb{F}}}
\nc{\BH}{{\mathbb{H}}}
\nc{\BP}{{\mathbb{P}}}
\nc{\BQ}{{\mathbb{Q}}}
\nc{\BR}{{\mathbb{R}}}
\nc{\BZ}{{\mathbb{Z}}}
\nc{\ff}{{\mathbb{F}}}
\nc{\kk}{{\mathbb{K}}}
\nc{\kko}{{\mathbb{K}}}
\nc{\coh}{{\text{Coh}}}
\nc{\CA}{{\mathcal{A}}}
\nc{\CC}{{\mathcal{C}}}
\nc{\CB}{{\mathcal{B}}}
\nc{\DD}{{\mathcal{D}}}
\nc{\CE}{{\mathcal{E}}}
\nc{\CF}{{\mathcal{F}}}
\nc{\tCF}{{\widetilde{\CF}}}
\nc{\tCM}{{\widetilde{\CM}}}
\nc{\tCT}{{\widetilde{\CT}}}
\nc{\oCF}{{\bar{\CF}}}
\nc{\CG}{{\mathcal{G}}}
\nc{\CL}{{\mathcal{L}}}
\nc{\CK}{{\mathcal{K}}}
\nc{\CI}{{\mathcal{I}}}
\nc{\CM}{{\mathcal{M}}}
\nc{\CH}{{\mathcal{H}}}
\nc{\CN}{{\mathcal{N}}}
\nc{\CO}{{\mathcal{O}}}
\nc{\CP}{{\mathcal{P}}}
\nc{\CR}{{\mathcal{R}}}
\nc{\CQ}{{\mathcal{Q}}}
\nc{\CS}{{\mathcal{S}}}
\nc{\CT}{{\mathcal{T}}}
\nc{\tCU}{{\widetilde{\CU}}}
\nc{\CU}{{\mathcal{U}}}
\nc{\CV}{{\mathcal{V}}}
\nc{\CW}{{\mathcal{W}}}
\nc{\tpsi}{{\widetilde{\Psi}}}
\nc{\wpi}{{\widetilde{\pi}}}
\nc{\Ker}{{\text{Ker }}}
\nc{\CX}{{\mathcal{X}}}
\nc{\tCX}{{\widetilde{\mathcal{X}}}}
\nc{\CY}{{\mathcal{Y}}}
\nc{\tCY}{{\widetilde{\mathcal{Y}}}}
\nc{\tN}{{\widetilde{\CN}}}
\nc{\pN}{{\BP\widetilde{\CN}}}
\nc{\tT}{{T}}
\nc{\fC}{{\mathfrak{C}}}
\nc{\fZ}{{\mathfrak{Z}}}
\nc{\fU}{{\mathfrak{U}}}
\nc{\fV}{{\mathfrak{V}}}
\nc{\fS}{{\mathfrak{S}}}
\nc{\od}{{\overline{d}}}
\nc{\rg}{{\textrm{R}\Gamma}}
\nc{\erg}{{\emph{R}\Gamma}}
\nc{\id}{{\textrm{Id}}}
\nc{\rhom}{{\textrm{RHom}}}
\def\pt{\textrm{pt}}
\def\and{\textrm{ }\&\textrm{ }}
\def\sym{\textrm{Sym}}
\def\proj{\textrm{proj}}
\def\tCF{\widetilde{\CF}}
\def\red{\text{red}}
\def\loccit{\emph{loc. cit. }}
\def\Id{\text{Id}}
\def\sgn{\text{sign }}
\def\km{K_\CM}
\def\kmm{K_{\CM'}}
\def\kms{K_{\CM \times S}}
\def\kmss{K_{\CM \times S \times S}}
\def\kmsss{K_{\CM \times S \times S \times S}}
\def\kmms{K_{\CM' \times S}}
\def\ks{K_S}
\def\fr{\text{Frac }\BF}
\def\pic{\text{Pic}}
\def\vac{{|\varnothing\rangle}}
\begin{document}

\title[AGT relations for sheaves on surfaces]{\large{\textbf{AGT RELATIONS FOR SHEAVES ON SURFACES}}}
\author[Andrei Negu\cb t]{Andrei Negu\cb t}
\address{MIT, Department of Mathematics, Cambridge, MA, USA}
\address{Simion Stoilow Institute of Mathematics, Bucharest, Romania}
\email{andrei.negut@gmail.com}

\maketitle

\renewcommand{\thefootnote}{\fnsymbol{footnote}} 
\footnotetext{\emph{2010 Mathematics Subject Classification: }Primary 14J60, Secondary 14D21}     
\renewcommand{\thefootnote}{\arabic{footnote}} 

\renewcommand{\thefootnote}{\fnsymbol{footnote}} 


\renewcommand{\thefootnote}{\arabic{footnote}} 

\begin{abstract} We consider a natural generalization of the Carlsson-Okounkov Ext operator on the $K$--theory groups of the moduli spaces of stable sheaves on a smooth projective surface. We compute the commutation relations between the Ext operator and the action of the deformed $W$--algebra on $K$--theory, which was developed in \cite{Univ, W surf, Hecke}. The conclusion is that the Ext operator is closely related to a vertex operator, thus giving a mathematical incarnation of the Alday-Gaiotto-Tachikawa correspondence for a general algebraic surface.

\end{abstract}

\bigskip

\section{Introduction} 
\label{sec:introduction}

\medskip



\subsection{} Fix a smooth projective surface $S$ over an algebraically closed field of characteristic 0 (henceforth denoted by $\BC$), and invariants $(r,c_1) \in \BN \times H^2(S,\BZ)$. An important object in algebraic geometry is the moduli space:
\begin{equation}
\label{eqn:moduli}
\CM = \bigsqcup^\infty_{c_2 = \left \lceil \frac {r-1}{2r} c_1^2 \right \rceil} \CM_{c_2}
\end{equation}
of $H$--stable sheaves on $S$ with invariants $(r,c_1,c_2)$, for any $c_2 \in \BZ$. The reason why $c_2$ is bounded below is called Bogomolov's inequality, which states that there are no $H$--stable sheaves if $c_2 < \frac {r-1}{2r} c_1^2$. We make the same assumptions as in \cite{Univ, W surf, Hecke}: \\

\begin{itemize}

\item Assumption A: $\gcd(r,c_1 \cdot H) = 1$ \\

\item Assumption S: either $\begin{cases} \omega_S \cong \CO_S, \ \text{ or} \\ c_1(\omega_S) \cdot H < 0 \end{cases}$ \\

\end{itemize}

\noindent Assumption A implies that $\CM$ is proper and there exists a universal sheaf \footnote{We require the universal sheaves on the various connected components of $\CM$ to be constructed as in \cite[Subsection 5.9]{Univ}, which will ensure that they lift in a compatible way to the moduli spaces $\fZ_1$, $\fZ_2^\bullet$ of Subsection \ref{sub:basic mod}.}:
\begin{equation}
\label{eqn:universal}
\xymatrix{\CU \ar@{.>}[d] & \\ 
\CM \times S}
\end{equation}
Assumption S implies that $\CM$ is smooth. \\

\subsection{} The enumerative geometry of the moduli space of stable sheaves is quite rich, as evidenced by Donaldson invariants arising as certain integrals of cohomology classes on $\CM$. In the present paper, we will consider algebraic $K$-theory instead of cohomology, a process which accounts for the adjective ``deformed" in the representation theoretic structures explained in Subsection \ref{sub:intro action}. Explicitly, we consider the following algebraic $K$--theory groups with $\BQ$ coefficients:
\begin{equation}
\label{eqn:km}
\km = \bigoplus^\infty_{c_2 = \left \lceil \frac {r-1}{2r} c_1^2 \right \rceil} K_0(\CM_{c_2}) \underset{\BZ}{\otimes} \BQ
\end{equation}
Let $m \in \pic(S)$, and consider two copies $\CM$ and $\CM'$ of the moduli space \eqref{eqn:moduli}. These two copies may be defined with respect to a different $c_1$ and stability condition $H$, but we assume that the rank $r$ of the sheaves parameterized by $\CM$ and $\CM'$ is the same. In this paper, we will mostly be concerned with the virtual vector bundle:
\begin{equation}
\label{eqn:e}
\xymatrix{ & \CE_m \ar@{-->}[d] \\
& \CM \times \CM' \ar[ld]_{\pi_1} \ar[rd]^{\pi_2} \\ 
\CM & & \CM'}
\end{equation}
(which is a straightforward generalization of the construction of \cite{CO}) given by:
\begin{equation}
\label{eqn:sheaf e}
\CE_m = \text{R}\Gamma(m) - \text{R}\pi_* \left(\sRHom \left(\CU', \CU \otimes m \right)\right)
\end{equation}
The $\sRHom$ is computed on $\CM \times \CM' \times S$: the notation $\CU$, $\CU'$, $m$ stands for the pull-back of the universal sheaf from $\CM \times S$ and $\CM' \times S$, respectively, as well as the pull-back of the line bundle $m$ from $S$. Similarly, $\pi : \CM \times \CM' \times S \rightarrow \CM \times \CM'$ is the standard projection, so $\CE_m$ is a complex of coherent sheaves on $\CM \times \CM'$. \\

\subsection{} Any Schur functor applied to $\CE_m$ gives rise to a $K$-theory class on $\CM \times \CM'$, which in turn induces an operator from $\kmm$ to $\km$ via the usual formalism of correspondences. With this in mind, let us consider the following immediate generalization of \cite[equation (3)]{CO} and \cite[equation (19)]{CNO}. \\

\begin{definition}
\label{def:am}

Consider the so-called \textbf{Ext operator} $\kmm \xrightarrow{A_m} \km$ given by:
\begin{equation}
\label{eqn:a correspondence}
A_m = \pi_{1*} \left(\wedge^\bullet \CE_m  \cdot \pi_2^{*} \right)
\end{equation}
with $\pi_1$ and $\pi_2$ as in \eqref{eqn:e}. The push-forward and pull-back maps are well-defined due to the properness and smoothness of $\CM$ and $\CM'$, respectively. \\
\end{definition}

\noindent In \eqref{eqn:a correspondence}, the symbol $\wedge^\bullet \CE_m$ denotes the total exterior power of $\CE_m$; as $\CE_m$ is in general a complex of coherent sheaves, some explanation is in order. Specifically, consider: 
\begin{equation}
\label{eqn:power series expansion}
\wedge^\bullet \left( \frac {\CE_m}t \right) = \sum_{k \geq 0} (-t)^{-k} [\wedge^k \CE_m] \in K_{\CM \times \CM'}[[t^{-1}]]
\end{equation}
where the right-hand side is the power series expansion of a rational function in $t$ (see Subsection \ref{sub:exterior} for details). Then the quantity $\wedge^\bullet \CE_m$ in \eqref{eqn:a correspondence} denotes the $t=1$ specialization of \eqref{eqn:power series expansion}. If this specialization is not well-defined, then all the results in Subsections \ref{sub:intro action} and \ref{sub:intro final} hold with $m$ replaced by $\frac mt$, and all formulas being equalities of rational functions in $t$; see Subsection \ref{sub:exterior} for details. \\

\begin{example}
\label{ex:y genus}

Let $\CM = \CM'$ and $m = \frac {\CO_S}t$, with $t$ being a formal parameter. Then Assumption S implies that $\CE_{\CO_S}$ is locally free (up to a constant sheaf) and that:
$$
\CE_{\CO_S} \Big|_{\Delta} \cong \emph{Tan}_{\CM}
$$
where $\Delta \subset \CM \times \CM'$ denotes the diagonal. By a simple computation involving correspondences, the isomorphism above implies that:
$$
\emph{Tr}(A_{\frac {\CO_S}t}) = \sum_{k \geq 0} (-t)^{-k} \chi(\CM, \wedge^k \emph{Tan}_{\CM})
$$
(up to a constant rational function in $t$). The right-hand side is the $\chi_t$-genus of the moduli space $\CM$, as considered for example in \cite{GK}. \\

\end{example}

\subsection{} 
\label{sub:intro action}

In the present paper, we will seek to determine the Ext operator $A_m$ using the representation theoretic properties of the vector space $\km$. To this end, we need to make $\km$ into a representation of an appropriate algebra, which is ``big" enough in order to constrain the operator $A_m$ as much as possible. A candidate for such an algebra is $\CA_r$, namely a particular integral form of the deformed $W$--algebra of type $\fgl_r$ (initially defined in \cite{AKOS, FF}). The main purpose of \cite{Univ, W surf, Hecke} is to construct an action $\CA_r \curvearrowright \km$; we will recall the construction in Section \ref{sec:mod}, but let us summarize the main idea here. In \cite[Section 6.7]{W surf}, we construct certain geometric operators:
\begin{equation}
\label{eqn:def w}
\km \xrightarrow{W_{n,k}} \kms
\end{equation}
$\forall (n,k) \in \BZ \times \BN$. Under Assumptions A and S, we show in \cite[Theorem 4.15]{Hecke} that the operators $W_{n,k}$ satisfy the quadratic commutation relations developed in \cite{AKOS, FF} (see \eqref{eqn:comm w} for the specific form of these relations in our language). In \cite[Theorem 6.9]{W surf}, we further show that $W_{n,k} = 0$ for all $n \in \BZ$ and $k > r$, which tautologically implies that the operators \eqref{eqn:def w} yield an action $\CA_r \curvearrowright \km$. Write:
\begin{equation}
\label{eqn:def q}
q = [\omega_S] \in K_S := K_0(S) \otimes_{\BZ} \BQ
\end{equation}
Given two copies $\CM$ and $\CM'$ of the moduli space of stable sheaves, each with its own universal sheaf $\CU$ and $\CU'$, respectively, we may write:
\begin{equation}
\label{eqn:def u}
u = \det \CU \qquad \text{and} \qquad u' = \det \CU'
\end{equation}
for the determinant line bundles on $\CM \times S$ and $\CM' \times S$, respectively. We set:
\begin{equation}
\label{eqn:def gamma}
\gamma = \frac {m^r u}{q^r u'}
\end{equation}
which is the class of a line bundle on $\CM \times \CM' \times S$ (it is implicit that $m$ and $q$ are pulled back from $S$). Our main result, which will be proved in Section \ref{sec:ext}, is: \\

\begin{theorem}
\label{thm:main}

We have the following interaction between the Ext operator \eqref{eqn:a correspondence} and the generators \eqref{eqn:def w} of the $W$--algebra action:
\begin{equation}
\label{eqn:comm a}
A_m W_k(x) (1-x)  = m^k W_k(x\gamma) A_m \left(1- \frac x{q^k} \right)
\end{equation}
where $W_k(x) = \sum_{n \in \BZ} \frac {W_{n,k}}{x^{n}}$. The series coefficients of the two sides of \eqref{eqn:comm a} are maps $\kmm \rightarrow \kms$ which arise from certain correspondences $\in K_{\CM \times \CM' \times S}$ \footnote{See Subsection \ref{sub:correspondences} for a review of correspondences as $K$--theoretic operators. In particular, the composition of operators depends on which of $A_m$ and $W_k(x)$ is on the left of the other:
\begin{align*}
&A_m W_{n,k} : \kmm \xrightarrow{W_{n,k}} \kmms \xrightarrow{A_m \times \text{Id}_S} \kms \\ 
&W_{n,k} A_m : \kmm \xrightarrow{A_m} \km \xrightarrow{W_{n,k}} \kms
\end{align*}
The expressions above are actually given by certain correspondences in $K_{\CM \times \CM' \times S}$. Then the factors $q$ and $\gamma$ in the right-hand side of \eqref{eqn:comm a} indicate multiplication of the aforementioned  correspondences by various powers of the line bundles \eqref{eqn:def q} and \eqref{eqn:def gamma}.}. \\

\end{theorem}

\subsection{} A major motivation for the study of the Ext operator $A_m$ stems from mathematical physics: as explained in \cite{CNO}, the operator $A_m$ encodes the contribution of bifundamental matter to partition functions of 5d supersymmetric gauge theory on the algebraic surface $S$ times a circle. Moreover, the deformed $W$--algebra $\CA_r$ encodes symmetries of Toda conformal field theory. In this language, \eqref{eqn:comm a} becomes a mathematical manifestation of the Alday-Gaiotto-Tachikawa (AGT) correspondence between gauge theory and conformal field theory, by describing the Ext operator $A_m$ in terms of its commutation with $W$--algebra generators. To the author's knowledge, the present paper is the first mathematical treatment of AGT over general algebraic surfaces in rank $r>1$ (the reference \cite{CNO} used different techniques from ours to describe the Ext operator in the $r=1$ case). \\

\noindent However, we note that formulas \eqref{eqn:comm a} are not enough to completely determine $A_m$ for a general smooth projective surface $S$, and one should instead work with a deformed vertex operator algebra which properly contains several deformed $W$--algebras $\CA_r$. In the non-deformed case, a potential candidate for such a larger algebra was studied in \cite{FG}, where the authors expect that it contains operators which modify sheaves on $S$ along entire curves, on top of our operators $W_{n,k}$ which modify sheaves at individual points. While we give a complete algebro-geometric description of the latter operators, we do not have such a description for the former operators. Once such a description will be available, we hope that one can extend Theorem \ref{thm:main} to a bigger vertex operator algebra properly containing $\CA_r$. \\

\noindent There is a situation where formulas \eqref{eqn:comm a} do indeed determine the Ext operator $A_m$ completely: this corresponds to taking $S = \BA^2$, replacing $\CM$ by the moduli space of framed rank $r$ sheaves on the projective plane, and working with torus equivariant $K$-theory (see Subsection \ref{sub:verma intro} for details). In this particular case, we showed in \cite{W} that $\km$ is isomorphic to the universal Verma module of $\CA_r$. Theorem \ref{thm:main} holds in the situation at hand, and we will show in Theorem \ref{thm:unique} that our formulas completely determine the Ext operator $A_m$. This precisely yields the AGT correspondence for 5d supersymmetric gauge theory on $\BA^2 \times S^1$ (see e.g. \cite{BFN, BPSS, MO, SV} for the history of this correspondence in mathematical language). \\

\subsection{} 
\label{sub:intro final} 

Alongside the operators \eqref{eqn:def w}, we constructed in \cite[Theorem 4.15]{Hecke} $K$--theory lifts of the operators introduced by Grojnowski and Nakajima (\cite{G, Nak}, for $r=1$) and generalized by Baranovsky (\cite{Ba}, for any $r$) in cohomology:
\begin{equation}
\label{eqn:def heis}
\km \xrightarrow{P_n} \kms
\end{equation}
$\forall n \in \BZ \backslash 0$. These operators satisfy the Heisenberg commutation relation \eqref{eqn:q heis}, and interact with the deformed $W$--algebra generators according to relation \eqref{eqn:w and p}. \\

\noindent Recall the line bundles $q$ and $\gamma$ of \eqref{eqn:def q} and \eqref{eqn:def gamma}, respectively, and the footnote in Theorem \ref{thm:main} to properly interpret compositions of the operators $A_m$ and $P_{\pm n}$. \\

\begin{theorem}
\label{thm:main heis}

We have the following interaction between the Ext operator \eqref{eqn:a correspondence} and the Heisenberg operators $P_{\pm n}$, for all $n>0$:
\begin{equation}
\label{eqn:comm a heis 1}
A_m P_{-n} - P_{-n} A_m \gamma^n = A_m(1-\gamma^n)
\end{equation} 
\begin{equation}
\label{eqn:comm a heis 2}
\ \ A_m P_n - P_n A_m \gamma^{-n} = A_m(\gamma^{-n} - q^{rn}) 
\end{equation}

\medskip

\end{theorem}

\noindent In $\CA_r$, the series $W_r(x)$ matches the normal-ordered exponential of the generating series of the $P_n$'s (see Theorem \ref{thm:w}). With this in mind, it is straightforward to show that the $k=r$ case of Theorem \ref{thm:main} follows from Theorem \ref{thm:main heis}. \\


\noindent For any $\alpha \in K_S$, we will write $P_n\{\alpha\}$ for the composition: 
$$
P_n\{\alpha\} : \km \xrightarrow{P_n} \kms \xrightarrow{\text{multiplication by } \proj_2^*(\alpha)} \kms \xrightarrow{\proj_{1*}} \km
$$
where $\proj_1$, $\proj_2$ are the projections from $\CM \times S$ to $\CM$ and $S$, respectively. Let $q_1$ and $q_2$ denote the Chern roots of the cotangent bundle $\Omega_S^1$. Any symmetric Laurent polynomial in $q_1$, $q_2$ gives rise to a well-defined element of $\ks$, via:
$$
q_1+q_2 = [\Omega^1_S] \quad \text{and} \quad q = q_1q_2 = [\omega_S]
$$
Define:
\begin{equation}
\label{eqn:phi}
\Phi_m = A_m \exp \left[\sum_{n=1}^\infty \frac {P_n}n \left\{  \frac {(q^n - 1)q^{-nr}}{[n]_{q_1} [n]_{q_2}} \right\} \right]
\end{equation}
where $[n]_x = 1+x+\dots+x^{n-1}$. The expression in curly brackets is an element of $\ks$ because $[n]_{q_1} [n]_{q_2}$ is a unit in the ring $K_S$ \footnote{Indeed, since the Chern character gives us an isomorphism $\ks \cong A^*(S,\BQ)$, then $q_1+q_2 = [\Omega^1_S] \in 2 + \mathcal{N}$ and $q = [\omega_S] \in 1 + \mathcal{N}$, where $\mathcal{N} \subset \ks$ denotes the nilradical. Therefore, $[n]_{q_1} [n]_{q_2} \in n^2 + \mathcal{N}$, and is thus invertible in the ring $K_S$.}. \\

\begin{corollary}
\label{cor:main}

Formulas \eqref{eqn:comm a}, \eqref{eqn:comm a heis 1}, \eqref{eqn:comm a heis 2} imply the following:
\begin{equation}
\label{eqn:comm phi}
\Big[ \Phi_m W_k(x) - m^k W_k(x\gamma) \Phi_m \Big] \left(1- \frac x{q^k} \right) = 0
\end{equation}
\begin{equation}
\label{eqn:comm phi heis}
\ \Phi_m P_{\pm n} - P_{\pm n} \Phi_m \gamma^{\mp n} = \pm \Phi_m(\gamma^{\mp n} - q^{\pm rn})
\end{equation}
$\forall k,n > 0$. An operator $\Phi_m$ satisfying \eqref{eqn:comm phi}, \eqref{eqn:comm phi heis} is called a \textbf{vertex operator}. \\

\end{corollary}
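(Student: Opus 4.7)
The plan is to write $\Phi_m = A_m \cdot E$, where
\begin{equation*}
E = \exp \left[\sum_{n \geq 1} \frac{P_n}{n} \left\{ \frac{(q^n-1)q^{-nr}}{[q_1]_n [q_2]_n}\right\}\right],
\end{equation*}
and to push each of the operators $P_{\pm n}$ and $W_k(x)$ first across $E$, using the Heisenberg commutation relation \eqref{eqn:q heis} and the commutation \eqref{eqn:w and p}, and then across $A_m$, using Theorems \ref{thm:main} and \ref{thm:main heis}. The central claim is that the specific choice of coefficient $\alpha_n = (q^n-1)q^{-nr}/([q_1]_n[q_2]_n) \in K_S$ in the exponent is exactly what converts the inhomogeneous right-hand sides of \eqref{eqn:comm a}, \eqref{eqn:comm a heis 1}, \eqref{eqn:comm a heis 2} into the homogeneous vertex-operator form \eqref{eqn:comm phi}, \eqref{eqn:comm phi heis}.

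The positive-$n$ case of \eqref{eqn:comm phi heis} should be immediate: since the positive Heisenberg operators mutually commute, $E P_n = P_n E$, and the relation follows by right-multiplying \eqref{eqn:comm a heis 2} by $E$. For $P_{-n}$, I would invoke a Baker-Campbell-Hausdorff argument. Because \eqref{eqn:q heis} expresses $[P_n, P_{-n}]$ as a $K_S$-scalar, the commutator $[X, P_{-n}]$ of $P_{-n}$ with the exponent $X$ of $E$ is itself a scalar, so
\begin{equation*}
E \cdot P_{-n} = \bigl(P_{-n} + [X, P_{-n}]\bigr) \cdot E.
\end{equation*}
The coefficient $\alpha_n$ is calibrated so that $[X, P_{-n}] = q^{-rn} - 1$; plugging this into \eqref{eqn:comm a heis 1}, right-multiplied by $E$, yields \eqref{eqn:comm phi heis}.

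For \eqref{eqn:comm phi}, I would use \eqref{eqn:w and p} to deduce that $[P_n, W_k(x)]$ is a scalar multiple (in $x$) of $W_k(x)$, and then conclude, again by the exponential-BCH mechanism, that
\begin{equation*}
E \cdot W_k(x) = R(x) \cdot W_k(x) \cdot E
\end{equation*}
for a scalar rational function $R(x)$ with coefficients in $K_S$. The design of $\alpha_n$ is precisely such that $R(x) = (1-x)/(1 - x/q^k)$. Granted this,
\begin{equation*}
\Phi_m W_k(x)(1 - x/q^k) = R(x)(1 - x/q^k) \cdot A_m W_k(x) \cdot E = (1 - x) \cdot A_m W_k(x) \cdot E,
\end{equation*}
and applying \eqref{eqn:comm a} transforms the right-hand side into $m^k W_k(x\gamma) A_m (1 - x/q^k) E = m^k W_k(x\gamma) \Phi_m (1 - x/q^k)$, which is exactly \eqref{eqn:comm phi}.

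The hard part will be the explicit verification of the two scalar identities $[X, P_{-n}] = q^{-rn} - 1$ and $R(x) = (1-x)/(1 - x/q^k)$. Both stem from the same mechanism: the factor $[q_1]_n [q_2]_n$ appearing in the denominator of $\alpha_n$ is designed to cancel the identical factor arising from the $K_S$-coefficients in \eqref{eqn:q heis} and \eqref{eqn:w and p}, leaving behind only the clean rational expressions above. Once these two scalar identities are in place, no further geometric input beyond Theorems \ref{thm:main} and \ref{thm:main heis} is required, and the proof is completed by routine formal-series manipulation.
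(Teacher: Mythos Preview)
Your proposal is correct and follows essentially the same approach as the paper's proof: the paper writes $\Phi_m = A_m \cdot \exp$, handles the $P_n$ case by commutativity of positive Heisenberg operators, the $P_{-n}$ case via the scalar identity $[P_{-n}, \frac{P_n}{n}\{\alpha_n\}] = 1-q^{-rn}$ (your $[X,P_{-n}]=q^{-rn}-1$), and the $W_k(x)$ case via the BCH-type relation $\exp^{-1}W_k(x) = \frac{1-x/q^k}{1-x}\,W_k(x)\,\exp^{-1}$ (your $R(x)=\frac{1-x}{1-x/q^k}$), obtained from \eqref{eqn:w to p series} after multiplying by $\alpha_n$ and integrating over the second $S$ factor. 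The two ``scalar identities'' you flag as the hard part are exactly what the paper verifies explicitly; otherwise the arguments are identical.
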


\noindent I would like to thank Boris Feigin, Sergei Gukov, Hiraku Nakajima, Nikita Nekrasov, Andrei Okounkov, Francesco Sala and Alexander Tsymbaliuk for many interesting discussions on the subject of Ext operators and $W$--algebras. I gratefully acknowledge the support of NSF grant DMS--1600375. \\

\section{The moduli space of sheaves}
\label{sec:mod}

\medskip

\subsection{}
\label{sub:correspondences}

Throughout the present paper, we will work with smooth projective varieties over the field $\BC$. For such varieties $X$, we let:
$$
K_X = K_0(X) \otimes_{\BZ} \BQ
$$
be the Grothendieck group of the category of coherent sheaves on $X$, with scalars extended to $\BQ$. Derived tensor product yields a ring structure on $K_X$, and we have pull-back and push-forward maps for any proper l.c.i. morphism $X \rightarrow Y$. \\

\begin{definition}

Given smooth projective varieties $X$ and $Y$, any class $\Gamma \in K_{X \times Y}$ (called a ``correspondence" in this setup) defines an operator:
\begin{equation}
\label{eqn:correspondence}
K_Y \xrightarrow{\Psi_\Gamma} K_X, \qquad \Psi_\Gamma = \emph{proj}_{X*} \left( \Gamma \cdot \emph{proj}_Y^* \right) 
\end{equation}
where $\emph{proj}_X, \emph{proj}_Y$ denote the projection maps from $X \times Y$ to $X$ and $Y$, respectively. \\

\end{definition}

\noindent The composition of operators \eqref{eqn:correspondence} can also be described as a correspondence:
\begin{equation}
\label{eqn:composition corr}
\Psi_{\Gamma} \circ \Psi_{\Gamma'} = \Psi_{\Gamma''} : K_Z \longrightarrow K_X
\end{equation}
for any $\Gamma \in K_{X \times Y}$ and $\Gamma' \in K_{Y \times Z}$, where:
\begin{equation}
\label{eqn:composition corr 2}
\Gamma'' = \proj_{X \times Z*}(\proj_{X \times Y}^*(\Gamma) \otimes \proj_{Y \times Z}^*(\Gamma'))
\end{equation}
where $\proj_{X \times Y}$, $\proj_{Y \times Z}$, $\proj_{X \times Z}$ are the standard projections from $X \times Y \times Z$ to $X \times Y$, $Y \times Z$, $X \times Z$. Throughout the present paper, all operators $K_Y \rightarrow K_X$ arise from explicit correspondences. While we will use the language of composition of operators for convenience, what is really happening behind the scenes is composition of correspondences under the operation $(\Gamma , \Gamma') \mapsto \Gamma''$ of \eqref{eqn:composition corr 2}. \\


\subsection{} 
\label{sub:action}

In Subsection \ref{sub:intro action}, we referred to various operators $\km \rightarrow \kms$ as defining an action of a certain algebra on $\km$, and we will now explain the meaning of this notion. Given two arbitrary homomorphisms (of abelian groups):
\begin{equation}
\label{eqn:operators}
\km \xrightarrow{x,y} \kms 
\end{equation}
their ``product" $x y |_\Delta$ is defined as the composition:
$$
x y \Big|_\Delta : \km \xrightarrow{y} \kms \xrightarrow{x \times \text{Id}_S} \kmss \xrightarrow{\text{Id}_{\CM} \times \Delta^*} \kms 
$$
where $S \xrightarrow{\Delta} S \times S$ is the diagonal. It is easy to check that $(xy|_\Delta) z|_\Delta = x (yz|_\Delta)|_\Delta$, hence the aforementioned notion of product is associative, and it makes sense to define $x_1\dots x_n|_\Delta$ for arbitrarily many operators $x_1,\dots,x_n : \km \rightarrow \kms$. \\

\noindent Similarly, given two operators \eqref{eqn:operators}, we may define their commutator:
$$
\km \xrightarrow{[x,y]} \kmss 
$$
as the difference of the two compositions:
\begin{align*}
&\km \xrightarrow{y} \kms \xrightarrow{x \times \text{Id}_S} \kmss \\
&\km \xrightarrow{x} \kms \xrightarrow{y \times \text{Id}_S} \kmss \xrightarrow{\text{Id}_{\CM} \times \text{swap}^*} \kmss
\end{align*}
where $\text{swap} :  S \times S \rightarrow S \times S$ is the permutation of the two factors. In all cases studied in the present paper, we will have (below, we abuse notation by writing $\Delta_*$ instead of $(\text{Id}_{\CM} \times \Delta)_*$ for the diagonal map $\kms \rightarrow \kmss$):
$$
[x,y] = \Delta_*(z)
$$
for some $\km \xrightarrow{z} \kms$ which is uniquely determined (the diagonal embedding $\Delta_*$ is injective because it has a left inverse), and which will be denoted by $z = [x,y]_\red$. We leave it as an exercise to the interested reader to prove that the commutator satisfies the Leibniz rule in the form $[xy|_\Delta,z]_\red = x[y,z]_\red|_\Delta + [x,z]_\red y|_\Delta$, and the Jacobi identity in the form $[[x,y]_\red,z]_\red + [[y,z]_\red,x]_\red + [[z,x]_\red,y]_\red = 0$. \\

\noindent Finally, we consider the ring homomorphism $\BK = \BZ[q_1^{\pm 1}, q_2^{\pm 1}]^{\sym} \rightarrow \ks$ given by sending $q_1$ and $q_2$ to the Chern roots of the cotangent bundle of $S$ (therefore, $q=q_1q_2$ goes to the class of the canonical line bundle). We will often abuse notation, and write $q_1,q_2,q$ for the images of the indeterminates in the ring $\ks$. For any $\lambda \in \BK$ and any operator \eqref{eqn:operators}, we may define their product as the composition:
$$
\lambda \cdot x : \km \xrightarrow{x} \kms \xrightarrow{\text{Id}_\CM \times (\text{multiplication by }\lambda)} \kms 
$$
where we identify $\lambda \in \BK$ with its image in $\ks$. With this in mind, the ring $\ks$ can be thought of as the ``ring of constants" for the algebra of operators \eqref{eqn:operators}. \\

\subsection{} 
\label{sub:basic mod} 

Recall the universal sheaf \eqref{eqn:universal}, and consider the derived scheme:
\begin{equation}
\label{eqn:z1}
\fZ_1 = \BP_{\CM \times S} (\CU) \longrightarrow \CM \times S
\end{equation}
Since $\CU$ is isomorphic to a quotient $\CV/\CW$ of vector bundles on $\CM \times S$ (see \cite[Proposition 2.2]{Univ}), the projectivization in \eqref{eqn:z1} is defined as the derived zero locus of a section of a vector bundle on the projective bundle $\BP_{\CM \times S}(\CV)$. However, it was shown in \cite[Proposition 2.10]{Univ} that under Assumption S, the derived zero locus is actually a smooth scheme:
$$
\fZ_1 = \bigsqcup_{c = \left  \lceil \frac {r-1}{2r} c_1^2 \right \rceil}^\infty \fZ_{c+1,c}
$$
whose connected components are given by:
\begin{equation}
\label{eqn:z1 closed}
\fZ_{c+1,c} = \Big \{ (\CF_{c+1}, \CF_c) \text{ s.t. } \CF_{c+1} \subset_x \CF_c \text{ for some } x \in S \Big\} \subset \CM_{c+1} \times \CM_c
\end{equation}
and $\CF' \subset_x \CF$ means that $\CF' \subset \CF$ and the quotient $\CF/\CF'$ is isomorphic to the length 1 skyscraper sheaf at the point $x \in S$. This scheme comes with projection maps:
\begin{equation}
\label{eqn:z1 projections}
\xymatrix{ & \fZ_{c+1,c} \ar[ld]_{p_+} \ar[d]^{p_S} \ar[rd]^{p_-} & \\ 
\CM_{c+1} & S & \CM_c}
\end{equation}
More generally, we defined a derived scheme $\fZ_2^\bullet$ in \cite[Definition 4.17]{W surf}, which was shown (under Assumption S, in \cite[Proposition 4.21]{W surf}) to be a smooth scheme:
$$
\fZ_2^\bullet = \bigsqcup_{c = \left  \lceil \frac {r-1}{2r} c_1^2 \right \rceil}^\infty \fZ_{c+2,c}^\bullet
$$
whose connected components are given by:
\begin{equation}
\label{eqn:z2 closed}
\fZ_{c+2,c}^\bullet = \Big \{ (\CF_{c+2} \subset_x \CF_{c+1} \subset_x \CF_c) \text{ for some } x \in S \Big\} \subset  \CM_{c+2} \times \CM_{c+1} \times \CM_c
\end{equation}
This scheme is equipped with projection maps as in \eqref{eqn:z2 projections} below, but we observe that the rhombus is not derived Cartesian (and this is key to our construction):
\begin{equation}
\label{eqn:z2 projections}
\xymatrix{ & \fZ_{c+2,c}^\bullet \ar[ld]_{\pi_+} \ar[rd]^{\pi_-} & \\ 
\fZ_{c+2,c+1} \ar[rd]_{p_- \times p_S} & & \fZ_{c+1,c} \ar[ld]^{p_+ \times p_S} \\
& \CM_{c+1} \times S&}
\end{equation}
Note that all of the maps in the diagram above are proper, l.c.i. morphisms. Define:
\begin{equation}
\label{eqn:zn components}
\fZ_n^\bullet = \bigsqcup_{c = \left  \lceil \frac {r-1}{2r} c_1^2 \right \rceil}^\infty \fZ_{c+n,c}^\bullet
\end{equation}
whose connected components are given by derived fiber products:
\begin{equation}
\label{eqn:zn}
\fZ_{c+n,c}^\bullet = \fZ_{c+n,c+n-2}^\bullet \underset{\fZ_{c+n-1,c+n-2}}\times \dots \underset{\fZ_{c+2,c+1}}\times \fZ_{c+2,c}^\bullet \rightarrow \CM_{c+n} \times \dots \times \CM_{c}
\end{equation}
While $\fZ_n^\bullet$ is a derived scheme, we note that its closed points are all of the form:
\begin{equation}
\label{eqn:zn points}
\fZ_{c+n,c}^\bullet = \{(\CF_{c+n},\dots,\CF_c) \text{ sheaves s.t. } \CF_{c+n} \subset_x \dots \subset_x \CF_c \text{ for some } x \in S\}
\end{equation}
Therefore, we have the following projection maps, which only remember the smallest and the largest sheaf in a flag \eqref{eqn:zn points} (the notation below generalizes \eqref{eqn:z1 projections}):
\begin{equation}
\label{eqn:zn projections}
\xymatrix{ & \fZ_{c+n,c}^\bullet \ar[ld]_{p_+} \ar[d]^{p_S} \ar[rd]^{p_-} & \\ 
\CM_{c+n} & S & \CM_c}
\end{equation}
In diagram \eqref{eqn:zn projections}, the maps $p_\pm$ are l.c.i. morphisms, and the maps $p_\pm \times p_S$ are proper (they inherit these properties from the maps in \eqref{eqn:z2 projections}). Finally, we consider the line bundles $\CL_1,\dots,\CL_n$ on $\fZ_n^\bullet$, whose fibers are given by:
\begin{equation}
\label{eqn:line bundles}
\CL_i |_{(\CF_{c+n},\dots,\CF_c)} = \CF_{c+n-i,x}/\CF_{c+n-i+1,x}
\end{equation}
on the connected component $\fZ_{c+n,c}^\bullet \subset \fZ_n^\bullet$.  \\

\subsection{} 
\label{sub:w action}

Using the derived scheme \eqref{eqn:zn} and the maps \eqref{eqn:zn projections}, define for all $n,k \in \BN$:
\begin{align}
&K_{\CM} \xrightarrow{L_{n,k}} K_{\CM \times S}, & &L_{n,k} = (-1)^{k-1} (p_+ \times p_S)_* \left(\CL_n^k \cdot p_-^* \right) \label{eqn:op l} \\
&K_{\CM} \xrightarrow{U_{n,k}} K_{\CM \times S}, & &U_{n,k} = \frac{ (-1)^{rn + k-1} u^n}{q^{(r-1)n}} (p_- \times p_S)_* \left( \frac {\CL_n^k}{\CQ^r} \cdot p_+^* \right)  \label{eqn:op u}
\end{align}
where $\CQ = \CL_1\dots\CL_n$, and $u$ is the determinant of the universal sheaf on $\CM \times S$, as in \eqref{eqn:def u}.\footnote{Note that $u$ parameterizes the determinant of any one of the sheaves $\CF_{c+n},\dots,\CF_c$ in a flag \eqref{eqn:zn points}, since these sheaves have canonically isomorphic determinants, see Proposition \ref{prop:doesn't matter}} Implicit in the definitions \eqref{eqn:op l} and \eqref{eqn:op u} is that we define the operators therein for all components $\CM_c$ of the moduli space $\CM$. We also set:
\begin{equation}
\label{eqn:convention}
L_{n,0} = U_{n,0} = \delta_n^0 \qquad \text{and} \qquad L_{0,k} = U_{0,k} = \delta_k^0
\end{equation}
Finally, consider for all $k \in \BN \sqcup 0$ the operators:
\begin{equation}
\label{eqn:op e}
E_k : \km \xrightarrow{\text{pull-back}} \kms \xrightarrow {\text{multiplication by }\wedge^k\CU}\kms
\end{equation}
Since $\CU \cong \CV/\CW$ is a coherent sheaf of projective dimension 1 on $\CM \times S$ (see \cite[Proposition 2.2]{Univ}), the class $\wedge^k \CU$ in \eqref{eqn:op e} is defined by setting:
\begin{equation}
\label{eqn:first ext}
\wedge^\bullet \left( \frac {\CU}z \right) = \frac {\wedge^\bullet \left( \frac {\CV}z \right)}{\wedge^\bullet \left( \frac {\CW}z \right)}
\end{equation}
and picking out the coefficient of $z^{-k}$ when expanding in negative powers of $z$. The reason for our notation of the operators \eqref{eqn:op l}, \eqref{eqn:op u}, \eqref{eqn:op e} is that these three operators are respectively lower triangular, upper triangular, and diagonal with respect to the grading on $\km$ by the second Chern class (see \eqref{eqn:km}). \\

\begin{definition}
\label{def:w}

(\cite[Section 6.7]{W surf}) For any $(n,k) \in \BZ \times \BN$, consider the operators:
\begin{equation}
\label{eqn:op w}
W_{n,k} = \sum^{n_2 - n_1 = n}_{k_0+k_1+k_2 = k} q^{(k-1)n_2} \cdot L_{n_1,k_1} E_{k_0} U_{n_2,k_2} \Big|_\Delta 
\end{equation}
as $k_0,k_1,k_2,n_1,n_2$ run over $\BN \sqcup 0$ (recall the convention \eqref{eqn:convention}). \\

\end{definition}

\noindent Note that \eqref{eqn:op w} is an infinite sum, but its action on $\km$ is well-defined because the operators $L_{n,k}$ (respectively $U_{n,k}$) increase (respectively decrease) the $c_2$ of stable sheaves by $n$, and Bogomolov's inequality ensures that the moduli space of stable sheaves is empty if $c_2$ is small enough. \\

\subsection{}

Similarly with \eqref{eqn:op l} and \eqref{eqn:op u}, we have the following operators for all $n \in \BN$:
\begin{align}
&K_{\CM} \xrightarrow{P_{-n}} K_{\CM \times S}, & &P_{-n} = (p_+ \times p_S)_* \left(\sum_{i=0}^{n-1} \frac {q^{i}  \CL_n}{\CL_{n-i}}  \cdot p_-^* \right) \label{eqn:op p 1} \\
&K_{\CM} \xrightarrow{H_{-n}} K_{\CM \times S}, & &H_{-n} = (p_+ \times p_S)_* \left( p_-^* \right) \label{eqn:op h 1} \\
&K_{\CM} \xrightarrow{P_n} K_{\CM \times S}, & &P_n = (-1)^{rn} u^{n} (p_- \times p_S)_* \left( \sum_{i=0}^{n-1} \frac {q^{i}  \CL_n}{\CQ^r \CL_{n-i}} \cdot p_+^* \right)  \label{eqn:op p 2} \\
&K_{\CM} \xrightarrow{H_n} K_{\CM \times S}, & &H_n = (-1)^{rn} u^{n} (p_- \times p_S)_* \left( \CQ^{-r} \cdot p_+^* \right)  \label{eqn:op h 2}
\end{align}
As a consequence of \cite[formulas (2.15) and (5.18)--(5.21)]{W surf}, the operators $H_{\pm n}$ are to the operators $P_{\pm n}$ as complete symmetric functions are to power sum functions:
\begin{equation}
\label{eqn:h to p}
\sum_{n=0}^\infty \frac {H_{\pm n}}{z^{\pm n}} = \exp \left(\sum_{n=1}^\infty \frac {P_{\pm n}}{nz^{\pm n}} \right) \Big|_\Delta
\end{equation}
or, explicitly, $H_0 = \text{proj}_1^*$ (where $\text{proj}_1:\CM \times S \rightarrow \CM$ is the usual projection) and:
\begin{align*}
&H_{\pm 1} = P_{\pm 1} \\
&H_{\pm 2} = \frac {P_{\pm 1} P_{\pm 1} |_\Delta + P_{\pm 2}}2 \\
&H_{\pm 3} = \frac {P_{\pm 1} P_{\pm 1} P_{\pm 1}|_\Delta + 3P_{\pm 1} P_{\pm 2} |_\Delta + 2 P_{\pm 3}}6 
\end{align*}
and so on. \\

\begin{theorem}
\label{thm:w}

(\cite[Theorem 6.9]{W surf}) The operators \eqref{eqn:op w} satisfy, for all $n \in \BZ$:
\begin{equation}
\label{eqn:w=r}
W_{n,r} = u \sum_{n_1, n_2 \geq 0}^{n_2 - n_1 = n} H_{-n_1} H_{n_2} \Big|_{\Delta}  
\end{equation}
and:
\begin{equation}
\label{eqn:w>r}
W_{n,k} = 0
\end{equation}
for all $k > r$. \\

\end{theorem}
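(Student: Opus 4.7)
The plan is to package the operators $L_{n,k}, E_k, U_{n,k}$ into generating series in a formal variable $z$ and express $\sum_k W_{n,k}\, z^{k-1}$ as a single rational function of $z$. The vanishing \eqref{eqn:w>r} then becomes a polynomial-degree bound, while \eqref{eqn:w=r} is read off from the top coefficient, where $\wedge^r \CU = \det \CU = u$ enters naturally.

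First I would introduce the series $\CL_n(z) = \sum_{k \geq 1} L_{n,k}\, z^{k-1}$ and $\CU_n(z) = \sum_{k \geq 1} U_{n,k}\, z^{k-1}$. Because $L_{n,k}$ and $U_{n,k}$ depend on $k$ only through the line-bundle power $\CL_n^k$ (and a sign), these sums close up into geometric-series expressions of the form $(p_+ \times p_S)_*\bigl(\CL_n/(1+z\CL_n) \cdot p_-^*\bigr)$ and an analogue for $\CU_n(z)$. I would also set $\CE(z) = \wedge^\bullet(\CU z) = \sum_{k \geq 0} E_k\, z^k$; by \eqref{eqn:first ext} this equals $\wedge^\bullet(\CV z)/\wedge^\bullet(\CW z)$, a rational function of $z$ whose leading behaviour as $z \to \infty$ is $z^r \cdot u + O(z^{r-1})$, since $\rk \CV - \rk \CW = r$ and $\wedge^r \CU = u$.

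Next, substituting these into \eqref{eqn:op w} and absorbing the $q^{(k-1)n_2}$ factor into a $q$-shifted version of $z$, the generating series for $W_{n,k}$ decomposes, for each pair $(n_1,n_2)$ with $n_2 - n_1 = n$, into a product $\CL_{n_1}(z)\, \CE(z)\, \CU_{n_2}(z) |_\Delta$, up to explicit signs and $q$-shifts. The essential geometric input is that the apparent poles of the $L$- and $U$-factors (at $z = -\CL_{n_1}^{-1}$ and $z = -\CL_{n_2}^{-1}$) cancel against zeros of $\CE(z)$ supported on the image of the push-forward maps from $\fZ^\bullet_{n_1+n_2}$. The resulting expression is then a Laurent polynomial in $z$ of degree at most $r-1$, which gives \eqref{eqn:w>r}. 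Extracting the coefficient of $z^{r-1}$ picks out only the leading term $z^r \cdot u$ of $\CE(z)$, paired with the ``undecorated'' push-forwards surviving in $\CL_{n_1}(z)$ and $\CU_{n_2}(z)$ at infinity; by \eqref{eqn:op h 1}--\eqref{eqn:op h 2} these are precisely $H_{-n_1}$ and $H_{n_2}$, matching \eqref{eqn:w=r} after reconciling the $(-1)^{rn_2}$ and $q^{(k-1)n_2}$ weights.

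The main obstacle is the pole cancellation asserted above: it is a non-trivial $K$-theoretic identity reflecting the derived-scheme geometry of chains of length-$1$ modifications encoded by $\fZ_n^\bullet$ and the non-Cartesian diagram \eqref{eqn:z2 projections}. Carrying it out rigorously requires a careful analysis of $\CU$ restricted to the strata parametrizing such chains, along with tight bookkeeping of the sign and $q$-weight conventions built into the definitions of $L_{n,k}, U_{n,k}, W_{n,k}$. This is where the content of the theorem really lies; once the rational-function reformulation is in place, the extraction of the two statements is mechanical.
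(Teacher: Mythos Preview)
The paper does not prove this theorem: it is quoted verbatim from \cite{W surf}, Section 6, and no argument is given here. So there is no ``paper's own proof'' to compare against in the present text.

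As for your outline on its own terms: the strategy of packaging the $L,E,U$ operators into generating series in an auxiliary variable and reading off \eqref{eqn:w>r} as a degree bound and \eqref{eqn:w=r} as a leading coefficient is reasonable, and it is in the spirit of the generating-series manipulations the paper itself uses (cf.\ Subsection \ref{sub:series} and formula \eqref{eqn:op w series}). However, what you have written is a plan, not a proof. The entire content is concentrated in the ``pole cancellation'' you flag at the end, and you have only asserted it, not established it. In particular, note that $E_k = \wedge^k(\CU)$ is \emph{not} zero for $k>r$ (since $\CU$ is only a rank-$r$ coherent sheaf of projective dimension 1, not a vector bundle, and $\wedge^\bullet(\CU/z)$ is defined as a ratio \eqref{eqn:first ext}), so the vanishing \eqref{eqn:w>r} genuinely requires a cancellation among the terms of \eqref{eqn:op w} and cannot be read off from a naive rank count. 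Your description of how that cancellation should work (``poles of the $L$- and $U$-factors cancel against zeros of $\CE(z)$ supported on the image of the push-forward maps'') is suggestive but imprecise: you would need to identify exactly which $K$-theory classes on $\fZ_n^\bullet$ are being compared, and verify the identity using the residue/shuffle calculus of Propositions \ref{prop:corr}--\ref{prop:corr y} or an equivalent tool. Until that step is carried out, the argument is incomplete.
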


\subsection{} We will now present the interaction of the operators \eqref{eqn:op w}, \eqref{eqn:op p 1}, \eqref{eqn:op p 2}. Recall the commutator construction from Subsection \ref{sub:action}. \\

\begin{theorem}
\label{thm:acts}

(stated in \cite[Theorem 3.13 and Proposition 3.15]{W surf} and proved in \cite[Theorem 4.15]{Hecke} under Assumption S) We have the following formulas for all $n,n' \in \BZ$ and $k,k' \in \BN$:
\begin{align}
&[W_{n,k}, W_{n',k'}] = \Delta_* \left( \mathop{\sum_{k+k' = l+l'}^{\min(l,l') \leq \min(k,k')}}_{m+m' = n+n'}^{\frac ml \leq \frac {m'}{l'}}  c_{n,n',k,k'}^{m,m',l,l'} (q_1,q_2) \cdot W_{m,l} W_{m',l'} \Big|_\Delta \right) \label{eqn:comm w} \\
&[P_{n'}, P_n] = \Delta_* \begin{cases} 0 & \text{if } \sgn(n) = \sgn(n') \\
\delta_{n+n'}^0 n [n]_{q_1} [n]_{q_2} [r]_{q^n} \cdot \emph{proj}_{\CM}^* & \text{if }n'<0<n \end{cases} \label{eqn:q heis} \\ \nonumber \\
&[W_{n',k'}, P_{\pm n}] = \Delta_* \Big( \pm [n]_{q_1} [n]_{q_2} [k']_{q^n} q^{n(r-k') \delta_\pm^+} \cdot W_{\pm n + n',k'}  \Big) \label{eqn:w and p}
\end{align}
where the coefficients $c_{n,n',k,k'}^{m,m',l,l'} (q_1,q_2) \in \ks$ were computed algorithmically in \cite{W surf}. They are certain universal symmetric Laurent polynomials in $q_1,q_2$. \\

\end{theorem}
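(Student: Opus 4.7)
The strategy is to interpret each commutator as a virtual $K$-theory class on a derived fiber product of the flag correspondences $\fZ_n^\bullet$, and then to analyze the geometric difference between the two orderings in the product. Throughout, base change together with the smoothness results of Subsection \ref{sub:basic mod} reduce everything to a computation on a genuine smooth scheme; after this, a Koszul-type expansion of the excess normal bundle isolates the piece supported on the diagonal $\Delta \subset S \times S$ that appears on the right-hand side of \eqref{eqn:comm w}--\eqref{eqn:w and p}.

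\textbf{Heisenberg relation.} I would tackle $[P_{n'}, P_n]$ first. The composition $P_{n'} P_n$ is a pushforward from a double flag space parametrizing $(\CF'', \CF', \CF)$ with length-$n$ and length-$n'$ quotients supported at (possibly different) points of $S$. When $\sgn(n) = \sgn(n')$, both compositions realize the same moduli of doubly-nested flags, and a direct swap of the two quotients shows the two orderings agree, so the commutator vanishes. When $n'<0<n$, the two orderings describe genuinely different configurations (grow-then-shrink versus shrink-then-grow), and their difference is supported on the locus where the length-$n$ added and removed quotients coincide as subsheaves at a single point of $S$. The Koszul class of the corresponding excess normal bundle, together with the line-bundle decorations in \eqref{eqn:op p 1}--\eqref{eqn:op p 2} and the determinant twist by $u^n$, produces the scalar $n[q_1]_n[q_2]_n[q^n]_r$ after a direct symmetric-function calculation.

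\textbf{W-P relation.} For $[W_{n',k'}, P_{\pm n}]$, I would insert the definition \eqref{eqn:op w} of $W_{n',k'}$ as the sum $L E U|_\Delta$, and apply the graded Leibniz rule for $[\cdot,\cdot]_\red$ from Subsection \ref{sub:action}. This reduces the problem to the three simpler commutators of $P_{\pm n}$ with $L_{n_1,k_1}$, $E_{k_0}$, and $U_{n_2,k_2}$, each of which is a pushforward from an explicit nested flag correspondence and can be evaluated by the same Koszul/excess-normal-bundle analysis as above. Summing the pieces and absorbing the telescoping contributions in the $L E U$ triangular decomposition collapses everything into a single shifted $W_{\pm n + n', k'}$ with the predicted scalar $\pm [q_1]_n [q_2]_n [q^n]_{k'} q^{n(r-k')\delta^+_\pm}$.

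\textbf{W-W relation and main obstacle.} The hard part is the quadratic relation \eqref{eqn:comm w}. My approach would be the shuffle-algebra one: view the generating series $W_k(x)$ as images, under a geometric realization, of explicit elements of a trigonometric shuffle algebra over $\BK$, and show that the flag correspondence $\fZ_n^\bullet$ implements the shuffle product. The commutator of two such series then becomes a rational function with controlled poles at $x/x' = q^j$, and a partial-fraction decomposition in $x/x'$ combined with the vanishing $W_{n,k}=0$ for $k>r$ from Theorem \ref{thm:w} collapses the expansion to the finite quadratic combination on the right-hand side of \eqref{eqn:comm w}, with the coefficients $c_{n,n',k,k'}^{m,m',l,l'}(q_1,q_2)$ read off as residues. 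The main technical obstacle is the identification of these residues with pushforwards of concrete line-bundle combinations on $\fZ_{n+n'}^\bullet$, which requires a careful analysis of the non-Cartesian rhombus \eqref{eqn:z2 projections} and of the contribution of the determinant twist $u$; this is where the bulk of the effort in \cite{W surf} is concentrated.
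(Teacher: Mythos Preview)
The paper does not contain a proof of this theorem: it is stated with an explicit citation to \cite{W surf}, and the surrounding discussion (``Indeed, we show in \cite{W surf} that \eqref{eqn:comm w} is the defining relation\dots'') makes clear that the argument lives entirely in that reference, with Theorem~\ref{thm:w} likewise imported. So there is nothing in the present paper to compare your proposal against.

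That said, your outline is broadly aligned with what \cite{W surf} actually does, at least at the level of slogans: the shuffle-algebra realization of $W_k(x)$, the residue/partial-fraction analysis governing \eqref{eqn:comm w}, and the excess-bundle interpretation of the Heisenberg commutator are all the right ingredients. One caution: your sketch of $[P_{n'},P_n]=0$ for $\sgn(n)=\sgn(n')$ by ``a direct swap of the two quotients'' is too glib. The derived spaces $\fZ^\bullet_n$ record an ordered flag at a single point, so the two compositions $P_{-n'}P_{-n}$ and $P_{-n}P_{-n'}$ are pushforwards from genuinely different (non-isomorphic) derived schemes; the equality is not a set-theoretic symmetry but a nontrivial identity of pushforwards that in \cite{W surf} is deduced from the shuffle description (equivalently, from Proposition~\ref{prop:corr} and a symmetrization of the integrand). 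Similarly, in the $n'<0<n$ case the scalar $n[q_1]_n[q_2]_n[q^n]_r$ does not drop out of a single Koszul class but from an $n$-fold iterated residue computation. If you intend to write this up, those are the two places where the actual work happens and where a sketch at this level of detail would not yet constitute a proof.
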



\noindent Indeed, we show in \cite[Theorem 3.13]{W surf} that \eqref{eqn:comm w} is equivalent to the defining relation in the deformed $W$--algebra $\CA_r$ (with $\Delta_*$ replaced by $(1-q_1)(1-q_2)$). Similarly, relation \eqref{eqn:q heis} is the defining relation in the deformed Heisenberg algebra. As we explained in \cite[Definition 5.2 and formulas (5.20)--(5.21)]{W surf} and proved in \cite[Theorem 4.15]{Hecke}, the fact that the operators \eqref{eqn:op w}, \eqref{eqn:op p 1}, \eqref{eqn:op p 2} satisfy the relations in Theorem \ref{thm:acts} is precisely what we mean when we say that the deformed $W$--algebra $\CA_r$ and the deformed Heisenberg algebra act on the groups $\km$. \\

\subsection{} 
\label{sub:series}

Let us consider the operators of Subsection \ref{sub:w action} and form the generating series:
\begin{equation}
\label{eqn:series}
L_n(y) = \sum_{k=1}^\infty \frac {L_{n,k}}{(-y)^k}, \qquad U_n(y) = \sum_{k=1}^\infty \frac {U_{n,k}}{(-y)^k}
\end{equation}
In other words, these power series are considered as operators:
\begin{align*}
&K_{\CM} \xrightarrow{L_n(y)} K_{\CM \times S} \left\llbracket \frac 1y \right\rrbracket, & &L_n(y) = (p_+ \times p_S)_* \left( \frac 1{1 - \frac {y}{\CL_n}} \cdot p_-^* \right) \\
&K_{\CM} \xrightarrow{U_n(y)} K_{\CM \times S} \left\llbracket \frac 1y \right\rrbracket, & &U_n(y) = \frac {(-1)^{rn} u^n}{q^{(r-1)n}} (p_- \times p_S)_* \left( \frac {\CQ^{-r}}{1 - \frac {y}{\CL_n}} \cdot p_+^* \right)
\end{align*}
We will also consider the operators:
$$
E(y) : \km \xrightarrow{\text{pull-back}} \kms \xrightarrow {\text{multiplication by }\wedge^\bullet \left( \frac {\CU}y \right)}\kms\left\llbracket \frac 1y \right\rrbracket
$$
Furthermore, we will consider the generating series:
\begin{equation}
\label{eqn:big series}
L(x,y) = 1 + \sum_{n=1}^\infty L_n(y)x^{n} \qquad \qquad U(x,y) = 1 + \sum_{n=1}^\infty \frac {U_n(y)}{x^n}
\end{equation}
and also set:
\begin{align} 
&W_k(x) = \sum_{n = -\infty}^\infty \frac {W_{n,k}}{x^n} \label{eqn:series w} \\
&W(x,y) = 1 + \sum_{k=1}^\infty \frac {W_k(x)}{y^k} \label{eqn:big series w}
\end{align}
The definition of the $W$--algebra generators in \eqref{eqn:op w} is equivalent to the following:
\begin{equation}
\label{eqn:op w series}
W\left(x, yD_x \right) = L \left(x, yD_x \right) E\left( yD_x \right) U \left(xq, yD_x \right) \Big|_\Delta
\end{equation}
where $D_x$ is the $q$-difference operator in the variable $x$, i.e. $D_x(f(x)) = f(xq)$. In formula \eqref{eqn:op w series}, we place all powers of $D_x$ to the right (respectively to the left) of all powers of $x$ when writing down the power series $L(x,yD_x)$ (respectively $U(xq,yD_x)$). In terms of generating series, formula \eqref{eqn:w and p} reads:
\begin{equation}
\label{eqn:w to p series}
[W_k(x), P_{\pm n}] = \Delta_* \Big(\pm [n]_{q_1} [n]_{q_2} [k]_{q^n} q^{n(r-k) \delta_{\pm}^+} \cdot x^{\pm n} W_k(x) \Big)
\end{equation}

\medskip

\subsection{} 

Given a rational function $F(z)$, whose set of simple poles is partitioned into two disjoint sets $\CP_1 \sqcup \CP_2$ (which may be empty), we will write:
\begin{equation}
\label{eqn:partition}
\int_{\CP_1 \prec z \prec \CP_2} F(z) = \sum_{c \in \CP_1} \underset{z=c}{\text{Res}} \ \frac {F(z)}z = - \sum_{c \in \CP_2} \underset{z=c}{\text{Res}} \ \frac {F(z)}z
\end{equation}
The first equality is a definition, and the second equality is the residue theorem. If $F(z_1,\dots,z_n)$ is a rational function with simple poles of the form $z_i = c$ and $z_i = \gamma z_j$ for various $c \in \CP_1 \sqcup \CP_2$ and various scalars $\gamma$ in some set $\CQ$, then we set:
\begin{equation}
\label{eqn:partition many}
\int_{\CP_1 \prec z_1 \prec \dots \prec z_n \prec \CP_2} F(z_1,\dots,z_n) 
\end{equation}
as the result of the $n$-step process which starts with $\frac {F(z_1,\dots,z_n)}{z_1\dots z_n}$, and at the $i$-th step replaces a rational function in $z_i,\dots,z_n$ by the sum of its residues of the form $z_i = c \gamma_1\dots \gamma_{i-1}$ for various $c \in \CP_1$ and $\gamma_1,\dots,\gamma_{i-1} \in \CQ \cup \{1\}$. Just like in \eqref{eqn:partition}, the residue theorem implies that the answer is the same as $(-1)^n$ times the result of the $n$-step process which starts with $\frac {F(z_1,\dots,z_n)}{z_1\dots z_n}$, and at the $i$-th step replaces a rational function in $z_1,\dots,z_{n+1-i}$ by the sum of its residues of the form $z_{n+1-i} = c \gamma_1 \dots \gamma_{i-1}$ for various $c \in \CP_2$ and $\gamma_1,\dots,\gamma_{i-1} \in \CQ \cup \{1\}$. \\

\begin{proposition}
\label{prop:corr}

(\cite[following the proof of Proposition 5.12]{W surf}) We have the following formulas for the maps \eqref{eqn:zn projections}:
\begin{multline}
(p_{+} \times p_S)_* r(\CL_1,\dots,\CL_n) = \label{eqn:push one} \\
= \int_{\{0, \infty\} \sqcup \CP \prec z_n \prec \dots \prec z_1 \prec \CU} \frac {r(z_1,\dots,z_n) \prod_{i=1}^n \wedge^\bullet \left(\frac {z_iq}{\CU} \right)}{\left(1 - \frac {z_2 q}{z_1} \right) \dots \left(1 - \frac {z_n q}{z_{n-1}} \right) \prod_{1 \leq i < j \leq n} \zeta \left(\frac {z_j}{z_i} \right)}
\end{multline}
\begin{multline}
(p_{-} \times p_S)_* r(\CL_1,\dots,\CL_n) = \label{eqn:push two} \\
= \int_{\CU \prec z_n \prec \dots \prec z_1 \prec \{0, \infty\} \sqcup \CP} \frac {r(z_1,\dots,z_n) \prod_{i=1}^n \wedge^\bullet \left(- \frac {\CU}{z_i} \right)}{\left(1 - \frac {z_2 q}{z_1} \right) \dots \left(1 - \frac {z_n q}{z_{n-1}} \right) \prod_{1 \leq i < j \leq n} \zeta \left(\frac {z_j}{z_i} \right)} 
\end{multline}
where:
$$
\zeta(x) = \frac {(1-x q_1)(1-x q_2)}{(1-x)(1-x q)} \in \ks(x)
$$
and $r(z_1,\dots,z_n)$ is a rational function with coefficients in $(p_\pm \times p_S)^*(\kms)$, whose poles are all of the form $z_i = c$, where $c \in \{0,\infty\} \sqcup \CP$ for some finite set $\CP$. \\

\end{proposition}

\noindent Note that the integrands in \eqref{eqn:push one}--\eqref{eqn:push two} have poles when $z_i$ equals $q^{1\text{ or }0}$ times one of the Chern roots of $\CU$. Thus, the location of the symbol ``$\CU$" in the subscripts of the integrals \eqref{eqn:push one}--\eqref{eqn:push two} indicates whether these poles are thought to lie in the set $\CP_1$ or $\CP_2$ for the sake of the notation \eqref{eqn:partition}. \\


\section{Computing the Ext operator}
\label{sec:ext}

\medskip

\subsection{} \label{sub:exterior} To properly define the Ext operator \eqref{eqn:a correspondence}, note that the complex $\CE_m$ of \eqref{eqn:e} can be written as a difference $\CV_1 - \CV_2$ of vector bundles. Then we define:
\begin{equation}
\label{eqn:second ext}
\wedge^\bullet \left( \frac {\CE_m}t \right) = \frac {\wedge^\bullet \left( \frac {\CV_1}t \right)}{\wedge^\bullet \left( \frac {\CV_2}t \right)} = \frac {\sum_{k=0}^{\text{rank }\CV_1} (-t)^{-k} [\wedge^k\CV_1]}{\sum_{k=0}^{\text{rank }\CV_2} (-t)^{-k}[\wedge^k\CV_2]}
\end{equation}
and interpret it as a rational function in $t$, with coefficients in $K_{\CM \times \CM'}$. Strictly speaking, the object $\wedge^\bullet \CE_m$ in \eqref{eqn:a correspondence} refers to the specialization of this rational function at $t = 1$. If this specialization is not well-defined (i.e. if $\sum_{k=0}^{\text{rank }\CV_2} (-1)^k [\wedge^k\CV_2]$ is not a unit in $K_{\CM \times \CM'}$), then we employ the following artifice: replace $m$ by $\frac mt$ in formulas \eqref{eqn:def gamma}, \eqref{eqn:comm a}, \eqref{eqn:comm phi} and throughout the current Section. Once one does this, then our main Theorems \ref{thm:main}, \ref{thm:main heis} and Corollary \ref{cor:main} will be equalities of operator-valued rational functions in $t$. Moreover, we will often use the notation:
$$
\wedge^\bullet \left( \frac t{\CU} \right) \qquad \text{instead of} \qquad \wedge^\bullet \left(\CU^\vee t \right)
$$
for any coherent sheaf $\CU$ (all our coherent sheaves have finite projective dimension). \\

\subsection{} The main goal of the present Section is to compute the commutation relations between the Ext operator $A_m : \kmm \longrightarrow \km$ of \eqref{eqn:a correspondence} and the operators:
\begin{equation}
\label{eqn:generators}
W_{n,k}, P_{\pm n'} : \km \longrightarrow \kms
\end{equation}
of \eqref{eqn:op w}, \eqref{eqn:op p 1}, \eqref{eqn:op p 2} for all $n \in \BZ$ and $n',k \in \BN$. One must be careful what one means by ``commutation relation". While the operator:
\begin{align*}
&P_{\pm n} A_m & &\text{unambiguously refers to} & &\kmm \xrightarrow{A_m} \km \xrightarrow{P_{\pm n}} \kms  \\
& A_m P_{\pm n} & &\text{henceforth refers to} & &\kmm \xrightarrow{P_{\pm n}} \kmms \xrightarrow{A_m \times \text{Id}_S} \kms 
\end{align*}
and analogously for $W_{n,k}$ instead of $P_{\pm n}$. As opposed from the operators \eqref{eqn:generators}, the operator $A_m$ acts non-trivially between all components of the moduli space:
\begin{equation}
\label{eqn:a in components}
A_m |_c^{c'} : K_{\CM_{c'}} \longrightarrow K_{\CM_c}
\end{equation}
In principle, the moduli spaces of sheaves in the domain and codomain can correspond to different choices of first Chern class and stability condition, but we always require them to have the same rank $r$. Therefore, there are two universal sheaves:
$$
\xymatrix{\CU \ar@{..>}[d] \\ \CM \times S} \qquad \qquad \xymatrix{\CU' \ar@{..>}[d] \\ \CM' \times S} 
$$
of the same rank $r$, where $\CM$ (respectively $\CM'$) is the union of the moduli spaces that appear in the codomain (respectively domain) of \eqref{eqn:a in components}. The determinants of these universal sheaves are denoted by $u$ and $u'$, respectively, as in \eqref{eqn:def u}. \\

\subsection{} 

We must explain how to make sense of the symbols $q,m,\gamma$ in \eqref{eqn:comm a}, \eqref{eqn:comm a heis 1}, \eqref{eqn:comm a heis 2}. In the language of correspondences from Subsection \ref{sub:correspondences}, the operators:
$$
\kmm \xrightarrow{z} \kms
$$
studied in the present paper (such as the compositions $W_{n,k} A_m$ or $P_{\pm n} A_m$ that appear in \eqref{eqn:comm a}, \eqref{eqn:comm a heis 1}, \eqref{eqn:comm a heis 2}) arise from $K$--theory classes $\Gamma$ on $\CM \times \CM' \times S$. Then the product $q z$ refers to the operator corresponding to the class $\text{proj}_S^*(q) \cdot \Gamma$, while the product $\gamma z$ refers to the operator corresponding to the class: 
$$
\text{proj}_{S}^*\left(\frac mq\right)^r \cdot \frac {\proj_{\CM \times S}^*(\det \CU)}{\proj_{\CM' \times S}^*(\det \CU')} \cdot \Gamma
$$
where $\CM\times \CM' \times S \xrightarrow{\proj_{\CM \times S}, \proj_{\CM' \times S}, \proj_S} \CM \times S, \CM' \times S, S$ are the projections. \\

\begin{proposition}
\label{prop:doesn't matter}

We have the equality of correspondences $K_{\CM_{c \pm n}} \rightarrow K_{\CM_c \times S}$:
\begin{equation}
\label{eqn:doesn't matter}
P_{\pm n} \cdot \left( \det \CU_{c \pm n} \right) = \left( \det \CU_c \right) \cdot P_{\pm n}
\end{equation}
for all $c \in \BZ$. Formula \eqref{eqn:doesn't matter} also holds with $P_{\pm n}$ replaced by $W_{n,k}$ or $H_{\pm n}$. \\
\end{proposition}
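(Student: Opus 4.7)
The plan is to reduce the claimed identity to an equality of line bundles on $\fZ^\bullet$, and then verify it using a Koszul-type computation on $\fZ^\bullet\times S$.

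First I would interpret both sides in the correspondence language of Subsection \ref{sub:correspondences}: each represents a class in $K_{\CM_c\times S\times\CM_{c\pm n}}$ obtained from the correspondence of $P_{\pm n}$ by tensoring with the pullback of either $\det\CU_{c\pm n}$ or $\det\CU_c$. Since this correspondence is the pushforward via $(p_+,p_S,p_-)$ of a class on $\fZ^\bullet$, the projection formula reduces the identity to
\[
(p_s\times p_S)^*(\det\CU_{c\pm n}) = (p_t\times p_S)^*(\det\CU_c) \quad \text{in } \pic(\fZ^\bullet),
\]
where $p_s$ and $p_t$ are the projections from $\fZ^\bullet$ to the source and target moduli spaces.

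Pulling both universal sheaves back to $\fZ^\bullet\times S$ yields two sheaves which I denote by $\tCF_\text{sub}$ and $\tCF_\text{sup}$ (with $\tCF_\text{sub}$ the one of larger $c_2$). By the modular interpretation \eqref{eqn:zn points} of $\fZ^\bullet$, these sit in an exact sequence
\[
0 \to \tCF_\text{sub} \to \tCF_\text{sup} \to \tCF_\text{sup}/\tCF_\text{sub} \to 0,
\]
whose quotient is a torsion sheaf supported on the graph $\Gamma_{p_S}\subset\fZ^\bullet\times S$, filtered with graded pieces $\Gamma_{p_S*}\CL_i$ for $i=1,\ldots,|n|$. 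Taking determinants, this gives
\[
\det\tCF_\text{sup} = \det\tCF_\text{sub} \otimes \bigotimes_{i=1}^{|n|} \det(\Gamma_{p_S*}\CL_i)
\]
in $\pic(\fZ^\bullet\times S)$.

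The central step is to show $\det(\Gamma_{p_S*}\CL)\cong\CO_{\fZ^\bullet\times S}$ for any line bundle $\CL$ on $\fZ^\bullet$. The graph $\Gamma_{p_S}$ is a regular codimension-$2$ embedding, and its conormal bundle $p_S^*\Omega^1_S$ extends globally to the vector bundle $\pi_S^*\Omega^1_S$ on $\fZ^\bullet\times S$, so the Koszul resolution is defined globally and yields
\[
[\Gamma_{p_S*}\CL] = \tCL - \tCL\otimes\pi_S^*\Omega^1_S + \tCL\otimes\pi_S^*\omega_S \quad \text{in } K^0(\fZ^\bullet\times S),
\]
where $\tCL$ is the pullback of $\CL$. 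Using $\det(\tCL\otimes\pi_S^*\Omega^1_S) = \tCL^{\otimes 2}\otimes\pi_S^*\omega_S$, the alternating product of determinants telescopes to $\CO$. Restricting the resulting equality $\det\tCF_\text{sup} = \det\tCF_\text{sub}$ along $\Gamma_{p_S}\colon\fZ^\bullet\to\fZ^\bullet\times S$ then yields the required identity of line bundles on $\fZ^\bullet$.

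Finally, the same argument applies verbatim to $H_{\pm n}$, since those operators are defined by pushforwards from the same $\fZ^\bullet$. For $W_{n,k}$, I would use the decomposition $W_{n,k}=\sum L_{n_1,k_1}E_{k_0}U_{n_2,k_2}|_\Delta$ of Definition \ref{def:w}: the operators $L_n$ and $U_n$ are pushforwards of the same type as $P_{\pm n}$ and handled identically, while $E_k$ (multiplication by $\wedge^k\CU$) preserves the moduli space and trivially commutes past $\det\CU$; associativity of the $|_\Delta$ product propagates the identity through the composition. The main obstacle is the Koszul computation of $\det(\Gamma_{p_S*}\CL)=\CO$, which rests on the global extendability of the conormal bundle of $\Gamma_{p_S}$; once that is secured, the remaining steps are routine bookkeeping.
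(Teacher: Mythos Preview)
Your overall strategy---reduce via the projection formula to an equality of line bundles on $\fZ^\bullet$, then use the short exact sequence on $\fZ^\bullet\times S$ and show the quotient has trivial determinant---is exactly the paper's approach. The paper simply states that this last step is ``the version in families of the well-known statement that a codimension 2 modification of a torsion-free sheaf does not change its determinant,'' and leaves it there.

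Where your write-up has a genuine gap is in the Koszul step. You assert that because the conormal bundle $p_S^*\Omega^1_S$ of $\Gamma_{p_S}$ extends to $\pi_S^*\Omega^1_S$ on $\fZ^\bullet\times S$, the Koszul complex is defined globally, and hence
\[
[\Gamma_{p_S*}\CL] \;=\; \tCL - \tCL\otimes\pi_S^*\Omega^1_S + \tCL\otimes\pi_S^*\omega_S
\quad\text{in } K^0(\fZ^\bullet\times S).
\]
This is not correct. A global Koszul resolution requires $\Gamma_{p_S}$ to be the zero locus of a regular section of a rank~2 bundle, not merely that the conormal bundle extend. Since $\Gamma_{p_S}$ is the pullback of the diagonal $\Delta\subset S\times S$, you would need $\Delta$ to be cut out by a section of a rank~2 bundle on $S\times S$, and this fails for a general surface. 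Consequently the displayed $K$-theory identity is false in general (for instance it would force $[\CO_\Delta]$ to be pulled back from one factor of $S\times S$).

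The fix is both simpler and standard. The sheaf $\Gamma_{p_S*}\CL$ is supported on $\Gamma_{p_S}$, which has codimension~2 in the smooth variety $\fZ^\bullet\times S$. If $j:U\hookrightarrow \fZ^\bullet\times S$ is the complement of this support, then $j^*\det(\Gamma_{p_S*}\CL)=\det(0)=\CO_U$, and since removing a codimension~$\ge 2$ locus from a smooth variety induces an isomorphism on $\pic$, we conclude $\det(\Gamma_{p_S*}\CL)\cong\CO$. This is precisely the ``well-known statement'' the paper invokes, and with it the rest of your argument (the extension to $H_{\pm n}$ and to $W_{n,k}$ via Definition~\ref{def:w}) goes through as written.
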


\noindent Equation \eqref{eqn:doesn't matter} is best restated in the language of correspondences from Subsection \ref{sub:correspondences}. In these terms, $P_{\pm n}$ is given by a $K$--theory class supported on the locus: 
$$
\mathfrak{C} = \Big\{ ( \CF_{c + n} \subset_{nx} \CF_c ) \text{ for some }x \in S \Big\} \subset \CM_{c + n} \times \CM_c \times S
$$
where $\CF' \subset_{nx} \CF$ means that $\CF' \subset \CF$ and that $\CF/\CF'$ is a length $n$ sheaf supported at $x$. Then \eqref{eqn:doesn't matter} merely states that the universal sheaves $\CU_{c + n}$ and $\CU_c$ have isomorphic determinants when restricted to $\mathfrak{C}$. This is just the version ``in families" of the well-known statement that a codimension 2 modification of a torsion-free sheaf does not change its determinant. As a consequence of Proposition \ref{prop:doesn't matter}, $\gamma$ of \eqref{eqn:def gamma} will behave just like a constant in all our computations, i.e. it will not matter where we insert $\gamma$ in any product of operators among $P_{\pm n}, H_{\pm n}, W_{n,k}$. \\

\subsection{} Our main intersection-theoretic computation is the following: \\

\begin{lemma}
\label{lem:commute heis}

We have the following relations involving the \emph{Ext} operator $A_m$:
\begin{equation}
\label{eqn:one heis}
A_m (H_{-n} - H_{-n+1}) = \gamma^n (H_{-n} - H_{-n+1}) A_m 
\end{equation}
\begin{equation}
\label{eqn:two heis}
\qquad \qquad A_m \left(H_n - H_{n-1} \gamma^{-1} \right) = \left( H_n \gamma^{-n} - H_{n-1} q^r \gamma^{-n+1} \right) A_m
\end{equation}
for all $n \in \BN$ (recall that $H_0 = \emph{proj}_1^*$, where $\CM \times S \xrightarrow{\emph{proj}_1} \CM$ is the usual projection). \\

\end{lemma}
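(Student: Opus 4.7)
Both sides of \eqref{eqn:one heis}--\eqref{eqn:two heis} are $K$-theory classes on $\CM\times\CM'\times S$ representing operators $\kmm\to\kms$, so the strategy is to compute each side as an explicit pushforward from a nested Hecke correspondence and then match them.

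For the LHS $A_m H_{-n}$, the kernel on $\CM\times\CM'_c\times S$ is the pushforward of $\wedge^\bullet(\CE_m(\CU,\CU'_{c+n}))$ from $\CM\times\fZ^{\prime\bullet}_{c+n,c}$ along $\text{Id}_\CM\times(p'_-\times p'_S)$, where $\CU'_{c+n}$ is the universal sub-sheaf on the $\CM'$ side. For the RHS $\gamma^n H_{-n} A_m$, the kernel on $\CM_{c+n}\times\CM'\times S$ is similarly a pushforward of $\wedge^\bullet(\CE_m(\CU_c,\CU'))$ from $\fZ^\bullet_{\CM,c+n,c}\times\CM'$, multiplied by $\gamma^n$; Proposition \ref{prop:doesn't matter} ensures that $\gamma$ is a ``constant'' that can be freely placed in the composition. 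The essential geometric input is the short exact sequence $0\to\CU'_{c+n}\to\CU'_c\to Q'\to 0$ on $\fZ^{\prime\bullet}_{c+n,c}$ (and its analogue on $\fZ^\bullet_{\CM,c+n,c}$), whose length-$n$ cokernel has $K$-class $\sum_{i=1}^n[\CO_x\otimes\CL'_i]$. Additivity of $\sRHom$ in the first argument yields
\[
\CE_m(\CU,\CU'_{c+n}) \;=\; \CE_m(\CU,\CU'_c) \;+\; R\pi_*\sRHom(Q',\CU\otimes m),
\]
and local Serre duality $\sRHom(\CO_x,\CG)\simeq\omega_S^{-1}|_x\otimes\CG|_x[-2]$ turns each of the $n$ skyscraper summands into a rank-$r$ class at the universal point $x$; upon taking $\wedge^\bullet$, each modification contributes exactly one factor of $m^r u/(q^r u')=\gamma$ up to a line-bundle twist by $\CL'_i$, so that $n$ modifications produce the expected overall $\gamma^n$ discrepancy between the two sides.

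To carry out the matching, I invoke the integral formulas of Proposition \ref{prop:corr y} to convert both sides into contour integrals in formal variables $z_1,\dots,z_n$ standing for the line bundles $\CL'_i$ (on the LHS) and $\CL_i$ (on the RHS), with $y$ playing the role of an auxiliary variable tracking the outermost pole. The telescoped combination $H_{-n}-H_{-n+1}$ corresponds on both sides to a decomposition $1=(1-y/z_n)+y/z_n$ of the outermost integrand: the $y/z_n$ piece shifts a contour past $y$ and collapses one integration, reducing to the lower-order operator $H_{-n+1}$, while the residual piece is the one that must be matched across the two sides via the local determinantal identity of the previous paragraph. The analogous plan handles \eqref{eqn:two heis}; the asymmetric $q^r\gamma$ prefactor on $H_{n-1}$ reflects the $(-1)^{rn}u^n\CQ^{-r}$ factor built into \eqref{eqn:op h 2} together with one extra Serre-duality twist present in the positive-mode case. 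The main obstacle will be the careful tracking of the $\wedge^\bullet$ rational-function conventions of Subsection \ref{sub:exterior}, signs, and determinants as one transports between the two distinct Hecke loci on $\CM$ and on $\CM'$; in particular, the cleanness of the telescoped combinations $H_{-n}-H_{-n+1}$ and $H_n - H_{n-1}\gamma^{-1}$, rather than $H_{\pm n}$ in isolation, is a feature of the contour-integral comparison rather than an \emph{a priori} geometric statement.
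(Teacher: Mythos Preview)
Your overall strategy and geometric input are exactly right and match the paper: compose the correspondences, use the short exact sequence $0\to\CU'_+\to\CU'_-\to\Gamma_*(\CL_1``\oplus"\cdots``\oplus"\CL_n)\to 0$ and Serre duality for the graph $\Gamma$ to compute how $\CE_m$ changes under pullback, then convert both $A_m H_{-n}$ and $H_{-n}A_m$ into contour integrals. But the telescoping step is misdescribed and would not work as you wrote it. There is no auxiliary variable $y$ here: $H_{-n}$ has no $\frac{1}{1-y/\CL_n}$ factor, so Proposition~\ref{prop:corr y} is the wrong tool, and the decomposition $1=(1-y/z_n)+y/z_n$ is not what produces $H_{-n}-H_{-n+1}$. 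The correct mechanism uses Proposition~\ref{prop:corr}: after the geometric step and a change of variables $z_i\mapsto z_i/m$, the integrands for $A_m H_{-n}$ and $\gamma^n H_{-n}A_m$ are \emph{identical}, and the only difference is the position of $\{0,\infty\}$ relative to the nested contours $z_1\prec\cdots\prec z_n$ (innermost for one, outermost for the other). The difference of the two integrals therefore equals a sum of residues at $0$ and $\infty$; the only nonvanishing ones are $\mathrm{Res}_{z_1=\infty}$ and $\mathrm{Res}_{z_n=0}$, each of which kills one variable and returns the $(n{-}1)$-variable integrand. That is exactly the relation $\Upsilon_n-\Upsilon'_n\gamma^n=\Upsilon_{n-1}-\Upsilon'_{n-1}\gamma^n$, i.e.\ \eqref{eqn:one heis}. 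The positive case \eqref{eqn:two heis} is identical, with the extra $z_1^{-r}\cdots z_n^{-r}$ from $\CQ^{-r}$ shifting which residues survive and producing the $q^r$ asymmetry.

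So: keep your Claim~\ref{claim:joe}-style computation, but drop the $y$ and replace your ``$(1-y/z_n)+y/z_n$'' story with a contour-swap residue argument at $0$ and $\infty$.
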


\begin{proof}

Consider the following diagrams of spaces and arrows, for all $c, c' \in \BZ$:

\begin{equation}
\label{eqn:big diagram 1}
\xymatrix{ & & \CM_c \times S \times \CM_{c'} \ar@/_3pc/[llddd]_{\pi_1 \times \Id_S} \ar@/^3pc/[rrddd]^{\pi_2} & & \\
& & \CM_c \times \fZ^\bullet_{c'+n,c'} \ar[ld]_{\Id \times p_+ \times p_S} \ar[rd]   \ar[u]^{\Id \times p_S \times p_-} & & \\
& \CM_c \times \CM_{c'+n} \times S \ar[ld] \ar[rd] & & \fZ^\bullet_{c'+n,c'} \ar[ld]^{p_+ \times p_S} \ar[rd]_{p_-} & \\ 
\CM_c \times S & & \CM_{c'+n} \times S & & \CM_{c'}}
\end{equation}
\begin{equation}
\label{eqn:big diagram 2}
\xymatrix{ & & \CM_c \times S \times \CM_{c'}  \ar@/_3pc/[llddd]_{\pi_1' \times \text{Id}_S} \ar@/^3pc/[rrddd]^{\pi'_2} & & \\
& & \fZ^\bullet_{c,c-n} \times \CM_{c'} \ar[ld] \ar[rd]^{p_-' \times \Id}   \ar[u]_{p_+' \times p_S' \times \Id} & & \\
& \fZ^\bullet_{c,c-n}\ar[ld]^{p'_+ \times p_S'} \ar[rd]_{p'_-} & & \CM_{c-n} \times \CM_{c'} \ar[ld] \ar[rd] & \\ 
\CM_c \times S & & \CM_{c-n} & & \CM_{c'}}
\end{equation}
Recall that $H_{-n} = (p_+ \times p_S)_* p_-^*$, in the notation of \eqref{eqn:zn projections}. Then the rule for composition of correspondences in \eqref{eqn:composition corr} gives us the following formulas:
\begin{equation}
\label{eqn:comp 1}
A_m H_{-n} = (\pi_1 \times \Id_S)_*(\Upsilon_n \cdot \pi_2^{*})
\end{equation}
\begin{equation}
\label{eqn:comp 2}
H_{-n} A_m = (\pi'_1 \times \Id_S)_*(\Upsilon'_n \cdot \pi_2'^{*})
\end{equation}
where, in the notation of \eqref{eqn:big diagram 1} and \eqref{eqn:big diagram 2}:
\begin{align}
&\Upsilon_n = \left( \Id \times p_S \times p_- \right)_* \Big[ \wedge^\bullet \left( (\Id \times p_+)^* \CE_{m} \right) \Big] \label{eqn:gamma 1} \\
&\Upsilon'_n = \left( p_+' \times p_S' \times \Id \right)_* \Big[  \wedge^\bullet \left( (p'_- \times \Id)^* \CE_{m} \right) \Big] \label{eqn:gamma 2}
\end{align}
are certain classes on $\CM_c \times S \times \CM_{c'}$, that we will now compute. \\

\begin{claim}
\label{claim:joe}

We have the following equalities in $K$--theory:
\begin{equation}
\label{eqn:geo 1}
(\emph{Id} \times p_+)^* \CE_{m} = (\emph{Id} \times p_-)^* \CE_{m} + \left(\frac 1{\CL_1} +\dots+ \frac 1{\CL_n} \right) (\emph{Id} \times p_S)^*\left( \frac {\CU m}q \right) 
\end{equation}
on $\CM_c \times \fZ_{c'+n,c'}^\bullet$ (where $\CU$ denotes the universal sheaf on $\CM_c \times S$), and:
\begin{equation}
\label{eqn:geo 2}
(p_-' \times \emph{Id})^* \CE_{m} = (p_+' \times \emph{Id})^* \CE_{m} - (\CL_1+\dots+\CL_n) (p'_S \times \emph{Id})^*\left( {\CU'}^\vee m \right) 
\end{equation}
on $\fZ_{c,c-n}^\bullet \times \CM_{c'}$ (where $\CU'$ denotes the universal sheaf on $\CM_{c'} \times S$). \\

\end{claim}

\begin{proof}

To prove \eqref{eqn:geo 1}, consider the following diagram:
\begin{equation}
\label{eqn:grothendieck 1}
\xymatrix{ & \CM_c \times \fZ_{c'+n,c'}^\bullet \times S \ar[dd]^{\rho} \ar[ld]_-{\text{Id} \times p_+ \times \text{Id}_S} \ar[rd]^-{\text{Id} \times p_- \times \text{Id}_S} & \\ 
\CM_c \times \CM_{c'+n} \times S \ar[dd]^{\rho} & & \CM_c \times \CM_{c'} \times S \ar[dd]^{\rho} \\
 & \CM_c \times \fZ_{c'+n,c'}^\bullet  \ar[ld]_-{\text{Id} \times p_+} \ar[rd]^-{\text{Id} \times p_-} & \\ 
\CM_c \times \CM_{c'+n} & & \CM_c \times \CM_{c'}}
\end{equation}
where the vertical maps are the natural projections (we use the notation $\rho$ for all of them). We have the following short exact sequence of sheaves over $\fZ_{c'+n,c'}^{\bullet} \times S$:
\begin{equation}
\label{eqn:joe}
0 \longrightarrow \CU'_+ \longrightarrow \CU'_- \longrightarrow \Gamma_*(\CL_1 ``\oplus" \dots ``\oplus" \CL_n) \longrightarrow 0
\end{equation}
where $\CU'_\pm = (p_\pm^* \times \text{Id}_S)(\text{universal sheaf})$, while $\CL_1,\dots,\CL_n$ denote the tautological line bundles on $\fZ_{c'+n,c'}^\bullet$ that were defined in \eqref{eqn:line bundles}, and: 
\begin{equation}
\label{eqn:def Gamma}
\Gamma : \fZ_{c'+n,c'}^\bullet \rightarrow \fZ_{c'+n,c'}^\bullet \times S
\end{equation}
is the graph of the map $p_S$. The notation $``\oplus"$ in \eqref{eqn:joe} refers to a coherent sheaf which is filtered by the line bundles $\CL_1$,\dots,$\CL_n$; since we work in $K$--theory, we henceforth make no distinction between this coherent sheaf and its associated graded object. We may also pull-back the short exact sequence \eqref{eqn:joe} to $\CM_c \times \fZ_{c'+n,c'}^\bullet \times S$. Now apply the functor $\sRHom(-,\CU \otimes m)$ to the short exact sequence \eqref{eqn:joe}, where $\CU$ is the universal sheaf pulled back from $\CM_c \times S$:
$$
\sRHom(\CU'_+,\CU \otimes m) = \sRHom(\CU'_-,\CU \otimes m) - \sum_{i=1}^n \frac 1{\CL_i} \sRHom(\CO_\Gamma,\CU \otimes m)
$$
Now recall that the line bundles $\CL_i$ come from $\fZ_{c'+n,c'}^\bullet$, and so they are unaffected by the derived push-forward map $\rho_*$:
$$
\rho_* \sRHom(\CU'_+,\CU \otimes m) = \rho_* \sRHom(\CU'_-,\CU \otimes m) - \sum_{i=1}^n \frac 1{\CL_i} \rho_* \sRHom(\CO_\Gamma,\CU \otimes m)  
$$
Recalling \eqref{eqn:sheaf e}, the formula above reads:
\begin{equation}
\label{eqn:kay}
(\text{Id} \times p_+)^* \CE_{m} = (\text{Id} \times p_-)^* \CE_{m} + \sum_{i=1}^n \frac 1{\CL_i} \rho_* \sRHom(\CO_\Gamma,\CU \otimes m)
\end{equation}
Then \eqref{eqn:geo 1} follows from the fact that:
\begin{equation}
\label{eqn:rick}
\rho_* \sRHom(\CO_\Gamma,\CU \otimes m) = \underbrace{\rho_* \circ \Gamma_*}_{\text{Id}} \left( \sRHom(\CO, \Gamma^! (\CU \otimes m)) \right) = \CU m \Big|_\Gamma \otimes \Gamma^! \CO 
\end{equation}
(the first equality is coherent duality, and the second equality holds for any closed embedding $\Gamma$). The right-hand side of \eqref{eqn:rick} matches $(\text{Id} \times p_S)^*(\CU m/q)$ because the map $\Gamma: \fZ_n^\bullet \rightarrow \fZ_n^\bullet \times S$ is obtained by base change from the diagonal map $S \rightarrow S \times S$, and the ratio of dualizing objects on $S$ and $S \times S$ is precisely $q = [\omega_S]$. \\

\noindent As for \eqref{eqn:geo 2}, consider the diagram:
\begin{equation}
\label{eqn:grothendieck 2}
\xymatrix{ & \fZ_{c,c-n}^\bullet \times \CM_{c'} \times S \ar[dd]^{\rho} \ar[ld]_-{p'_+ \times \text{Id} \times \text{Id}_S} \ar[rd]^-{ p'_- \times \text{Id} \times \text{Id}_S} & \\ 
\CM_c \times \CM_{c'} \times S \ar[dd]^{\rho} & & \CM_{c-n} \times \CM_{c'} \times S \ar[dd]^{\rho} \\
 & \fZ_{c,c-n}^\bullet \times \CM_{c'}  \ar[ld]_-{p'_+ \times \text{Id}} \ar[rd]^-{p'_- \times \text{Id}} & \\ 
\CM_c \times \CM_{c'} & & \CM_{c-n} \times \CM_{c'}}
\end{equation}
and consider the following analogue of \eqref{eqn:joe}:
$$
0 \longrightarrow \CU_+ \longrightarrow \CU_- \longrightarrow \Gamma'_*(\CL_1 ``\oplus" \dots ``\oplus" \CL_n) \longrightarrow 0
$$
where $\CU_\pm = ({p'}_\pm^* \times \text{Id}_S)(\CU)$, and $\Gamma'$ denotes the graph of the map $p_S : \fZ_{c,c-n}^\bullet \rightarrow S$. Let us apply the functor $\sRHom(\CU', - \otimes m)$ to the short exact sequence above:
$$
\sRHom(\CU',\CU_- \otimes m) = \sRHom(\CU',\CU_+ \otimes m) + \sum_{i=1}^n \CL_i \otimes \sRHom(\CU',\CO_{\Gamma'} \otimes m)
$$
Let us apply $\rho_*$ to the equality above, and recall the definition of $\CE_m$ in \eqref{eqn:sheaf e}:
$$
(p_-' \times \text{Id})^*\CE_m = (p_+' \times \text{Id})^*\CE_m - \sum_{i=1}^n \CL_i \otimes \rho_* \sRHom(\CU',\CO_{\Gamma'} \otimes m)
$$
By adjunction, we have:
$$
\rho_* \sRHom(\CU',\CO_{\Gamma'} \otimes m) = \underbrace{\rho_* \circ \Gamma'_*}_{\text{Id}} \sRHom(\CU'|_{\Gamma'}, {p'_S}^*m) =  \left({\CU'}^\vee m \right) \Big|_{\Gamma'}
$$

\end{proof}

\noindent Armed with \eqref{eqn:geo 1} and \eqref{eqn:geo 2}, we may rewrite \eqref{eqn:gamma 1} and \eqref{eqn:gamma 2} as:
\begin{align*}
&\Upsilon_n = \left[ \wedge^\bullet\CE_m \right] \cdot \left( \Id \times p_S \times p_- \right)_* \left[  \bigotimes_{i=1}^n \wedge^\bullet \left( \frac {\CU m}{\CL_i q}\right) \right] \\
&\Upsilon_n' = \left[ \wedge^\bullet\CE_m \right] \cdot \left( p_+' \times p_S' \times \Id \right)_* \left[ \bigotimes_{i=1}^n \wedge^\bullet \left(- \frac {\CL_i m}{\CU'}\right) \right]
\end{align*}
Therefore, Proposition \ref{prop:corr} implies (henceforth, ``$\CU, \CU'$" in the subscript of the integrals are simply shorthand for ``the set of Chern roots of $\CU, \CU'$", respectively):
\begin{align}
&\Upsilon_n = \left[ \wedge^\bullet\CE_m \right] \int_{\CU' \prec z_n \prec \dots \prec z_1 \prec \{0, \infty\} \sqcup \CU} \frac  {\prod_{i=1}^n \frac {\wedge^\bullet \left(\frac {\CU m}{z_i q} \right)}{\wedge^\bullet \left(\frac {\CU'}{z_i} \right)}}{\prod_{i=1}^{n-1} \left(1 - \frac {qz_{i+1}}{z_i} \right) \prod_{i < j} \zeta \left( \frac {z_j}{z_i} \right)}  \label{eqn:gam 1} \\
&\Upsilon_n' = \left[ \wedge^\bullet\CE_m \right] \int_{\{0,\infty\} \sqcup \CU' \prec z_n \prec \dots \prec z_1 \prec \CU} \frac {\prod_{i=1}^n \frac {\wedge^\bullet \left(\frac {z_i q}{\CU} \right)}{\wedge^\bullet \left(\frac {z_i m}{\CU'} \right)}}{\prod_{i=1}^{n-1} \left(1 - \frac {qz_{i+1}}{z_i} \right) \prod_{i < j} \zeta \left( \frac {z_j}{z_i} \right)}  \label{eqn:gam 2}
\end{align}
Consider the following rational function with coefficients in $K_{\CM_c \times S \times \CM_{c'}}$:
\begin{equation}
\label{eqn:definition i}
I_n(z_1,\dots,z_n) = \frac {\prod_{i=1}^n \frac {\wedge^\bullet \left(\frac {\CU m}{z_i q} \right)}{\wedge^\bullet \left(\frac {\CU'}{z_i} \right)}}{\prod_{i=1}^{n-1} \left(1 - \frac {qz_{i+1}}{z_i} \right) \prod_{i < j} \zeta \left( \frac {z_j}{z_i} \right)} 
\end{equation}
One may then rewrite \eqref{eqn:gam 1} and \eqref{eqn:gam 2} as:
\begin{align*}
&\Upsilon_n = \left[ \wedge^\bullet\CE_m \right] \int_{\CU' \prec z_n \prec \dots \prec z_1 \prec \{0, \infty\} \sqcup \CU} I_n(z_1,\dots,z_n) \\
&\Upsilon_n' = \left[ \wedge^\bullet\CE_m \right] \int_{\{0,\infty\} \sqcup \CU' \prec z_n \prec \dots \prec z_1 \prec \CU} I_n(z_1m,\dots,z_nm)\cdot \gamma^{-n} 
\end{align*}
Changing the variables $z_i \mapsto \frac {z_i}m$ in the second formula, we conclude that:
\begin{equation}
\label{eqn:integral identity 1}
\Upsilon_n - \Upsilon_n' \cdot \gamma^n =
\end{equation}
$$
= \left[ \wedge^\bullet \CE_m \right] \left[ \int_{\CU' \prec z_n \prec \dots \prec z_1 \prec \{0, \infty\} \sqcup \CU} I_n  - \int_{\{0,\infty\} \sqcup \CU' \prec z_n \prec \dots \prec z_1 \prec \CU} I_n \right]
$$
The only difference between the two integrals is the location of the poles $\{0,\infty\}$ with respect to the variables $z_1,\dots,z_n$. Therefore, we conclude that the difference above picks up the residues at 0 and $\infty$ in the various variables. However, all such residues vanish, except for:
\begin{align}
&\underset{z_1 = \infty}{\text{Res}} \frac {I_n(z_1,\dots,z_n)}{z_1} = - I_{n-1}(z_2,\dots,z_n) \label{eqn:residue i 1} \\
&\underset{z_n = 0}{\text{Res}} \frac {I_n(z_1,\dots,z_n)}{z_n} = \gamma \cdot I_{n-1}(z_1,\dots,z_{n-1}) \label{eqn:residue i 2}
\end{align}
Therefore, formula \eqref{eqn:integral identity 1} implies that:
\begin{equation}
\label{eqn:nice}
\Upsilon_n - \Upsilon_n' \cdot \gamma^n  = \Upsilon_{n-1} - \Upsilon_{n-1}' \cdot \gamma^n
\end{equation}
which, as an equality of classes on $\CM_c \times S \times \CM_{c'}$, precisely encodes \eqref{eqn:one heis}. Let us run the analogous computation for \eqref{eqn:two heis} (we will recycle all of our notations):


\begin{equation}
\label{eqn:big diagram 1 bis}
\xymatrix{ & & \CM_c \times S \times \CM_{c'} \ar@/_3pc/[llddd]_{\pi_1 \times \Id_S} \ar@/^3pc/[rrddd]^{\pi_2} & & \\
& & \CM_c \times \fZ^\bullet_{c',c'-n} \ar[ld]_{\Id \times p_- \times p_S} \ar[rd]   \ar[u]^{\Id \times p_S \times p_+} & & \\
& \CM_c \times \CM_{c'-n} \times S \ar[ld] \ar[rd] & & \fZ^\bullet_{c',c'-n} \ar[ld]^{p_- \times p_S} \ar[rd]_{p_+} & \\ 
\CM_c \times S & & \CM_{c'-n} \times S & & \CM_{c'}}
\end{equation}
\begin{equation}
\label{eqn:big diagram 2 bis}
\xymatrix{ & & \CM_c \times S \times \CM_{c'}  \ar@/_3pc/[llddd]_{\pi_1' \times \text{Id}_S} \ar@/^3pc/[rrddd]^{\pi'_2} & & \\
& & \fZ^\bullet_{c+n,c} \times \CM_{c'} \ar[ld] \ar[rd]^{p_+' \times \Id}   \ar[u]_{p_-' \times p_S' \times \Id} & & \\
& \fZ^\bullet_{c+n,c}\ar[ld]^{p'_- \times p_S'} \ar[rd]_{p'_+} & & \CM_{c+n} \times \CM_{c'} \ar[ld] \ar[rd] & \\ 
\CM_c \times S & & \CM_{c+n} & & \CM_{c'}}
\end{equation}
Recall that $H_{n} = (-1)^{rn} u^n (p_- \times p_S)_* (\CQ^{-r} \cdot p_+^*)$, in the notation of \eqref{eqn:zn projections}. Then the rule for composition of correspondences in \eqref{eqn:composition corr} gives us the following:
\begin{equation}
\label{eqn:comp 1 bis}
A_m H_{n} = (\pi_1 \times \Id_S)_*(\Upsilon_n \cdot \pi_2^{*})
\end{equation}
\begin{equation}
\label{eqn:comp 2 bis}
H_{n} A_m = (\pi'_1 \times \Id_S)_*(\Upsilon'_n \cdot \pi_2'^{*})
\end{equation}
where:
\begin{align}
&\Upsilon_n = (-1)^{rn} {u'}^{n} \left(\Id \times p_S \times p_+ \right)_* \Big[\CQ^{-r} \cdot  \wedge^\bullet \left( (\Id \times p_-)^* \CE_{m} \right) \Big] \label{eqn:gamma 1 bis} \\
&\Upsilon'_n = (-1)^{rn} u^{n}  \left(p_-' \times p_S' \times \Id \right)_* \Big[\CQ^{-r} \cdot \wedge^\bullet \left( (p'_+ \times \Id)^* \CE_{m} \right) \Big] \label{eqn:gamma 2 bis}
\end{align}
are certain classes on $\CM_c \times S \times \CM_{c'}$.  As a consequence of \eqref{eqn:geo 1} and \eqref{eqn:geo 2}, which continue to hold as stated in the new setup, we may rewrite \eqref{eqn:gamma 1 bis} and \eqref{eqn:gamma 2 bis} as:
\begin{align*}
&\Upsilon_n = (-1)^{rn} {u'}^{n} \left[ \wedge^\bullet\CE_m \right] \left( \Id \times p_S \times p_+ \right)_* \left[\CQ^{-r} \bigotimes_{i=1}^n \wedge^\bullet \left(-\frac {\CU m}{\CL_i q}\right) \right] \\
&\Upsilon_n' = (-1)^{rn} u^{n} \left[ \wedge^\bullet\CE_m \right] \left( p_-' \times p_S' \times \Id \right)_* \left[\CQ^{-r} \bigotimes_{i=1}^n \wedge^\bullet \left(\frac {\CL_i m}{\CU'}\right) \right]
\end{align*}
Therefore, Proposition \ref{prop:corr} implies:
\begin{align}
&\Upsilon_n = \left[ \wedge^\bullet\CE_m \right] \int_{\{0,\infty\} \sqcup \CU \prec z_n \prec \dots \prec z_1 \prec \CU'} \frac  {(-1)^{rn} {u'}^{n} z_1^{-r} \dots z_n^{-r} \prod_{i=1}^n \frac {\wedge^\bullet \left(\frac {z_iq}{\CU'} \right)}{\wedge^\bullet \left(\frac {\CU m}{z_i q} \right)}}{\prod_{i=1}^{n-1} \left(1 - \frac {qz_{i+1}}{z_i} \right) \prod_{i < j} \zeta \left( \frac {z_j}{z_i} \right)}  \label{eqn:gam 1 bis} \\
&\Upsilon_n' = \left[ \wedge^\bullet\CE_m \right] \int_{\CU \prec z_n \prec \dots \prec z_1 \prec \{0,\infty\} \sqcup \CU'} \frac {(-1)^{rn} u^{n} z_1^{-r} \dots z_n^{-r} \prod_{i=1}^n \frac {\wedge^\bullet \left(\frac {z_i m}{\CU'} \right)}{\wedge^\bullet \left(\frac {\CU}{z_i} \right)}}{\prod_{i=1}^{n-1} \left(1 - \frac {qz_{i+1}}{z_i} \right) \prod_{i < j} \zeta \left( \frac {z_j}{z_i} \right)}  \label{eqn:gam 2 bis}
\end{align}
Consider the following rational function with coefficients in $K_{\CM_c \times S \times \CM_{c'}}$:
\begin{equation}
\label{eqn:definition i bis}
I_n(z_1,\dots,z_n) = \frac {q^{rn} \prod_{i=1}^n \frac {\wedge^\bullet \left(\frac {\CU'}{z_iq} \right)}{\wedge^\bullet \left(\frac {\CU m}{z_i q} \right)}}{\prod_{i=1}^{n-1} \left(1 - \frac {qz_{i+1}}{z_i} \right) \prod_{i < j} \zeta \left( \frac {z_j}{z_i} \right)} 
\end{equation}
One may then rewrite \eqref{eqn:gam 1 bis} and \eqref{eqn:gam 2 bis} as:
\begin{align*}
&\Upsilon_n = \left[ \wedge^\bullet\CE_m \right] \int_{\{0,\infty\} \sqcup \CU \prec z_n \prec \dots \prec z_1 \prec \CU'} I_n(z_1,\dots,z_n) \\
&\Upsilon_n' = \left[ \wedge^\bullet\CE_m \right] \int_{\CU \prec z_n \prec \dots \prec z_1 \prec \{0,\infty\} \sqcup \CU'} I_n\left( \frac {z_1m}q ,\dots, \frac {z_nm}q \right) \cdot \gamma^n
\end{align*}
Changing the variables $z_i \mapsto \frac {z_i q}m$ in the second formula, we conclude that:
\begin{equation}
\label{eqn:integral identity 1 bis}
\Upsilon_n - \Upsilon_n' \cdot \gamma^{-n} =
\end{equation}
$$
= \left[ \wedge^\bullet \CE_m \right] \left[ \int_{\{0,\infty\} \sqcup \CU \prec z_n \prec \dots \prec z_1 \prec \CU'} I_n  - \int_{\CU \prec z_n \prec \dots \prec z_1 \prec \{0,\infty\} \sqcup \CU'} I_n \right]
$$
The only difference between the two integrals is the location of the poles $\{0,\infty\}$ with respect to the variables $z_1,\dots,z_n$. Therefore, we conclude that the difference above picks up the residues at 0 and $\infty$ in the various variables. However, all such residues vanish, except for:
\begin{align*}
&\underset{z_n = 0}{\text{Res}} \frac {I_n(z_1,\dots,z_n)}{z_n} = \gamma^{-1} \cdot I_{n-1}(z_1,\dots,z_{n-1}) \\
&\underset{z_1 = \infty}{\text{Res}} \frac {I_n(z_1,\dots,z_n)}{z_1} = - q^r \cdot I_{n-1}(z_2,\dots,z_n) 
\end{align*}
Therefore, formula \eqref{eqn:integral identity 1 bis} implies that:
\begin{equation}
\label{eqn:nice bis}
\Upsilon_n - \Upsilon_n' \cdot \gamma^{-n}  = \Upsilon_{n-1} \cdot \gamma^{-1} - \Upsilon'_{n-1} \cdot q^r \gamma^{-n+1}
\end{equation}
which, as an equality of classes on $\CM_c \times S \times \CM_{c'}$, precisely encodes \eqref{eqn:two heis}. \\

\end{proof}

\subsection{}

We will now show how Lemma \ref{lem:commute heis} allows us to prove Theorem \ref{thm:main heis}. \\

\begin{proof} \emph{of Theorem \ref{thm:main heis}:} We will only prove \eqref{eqn:comm a heis 1}, since \eqref{eqn:comm a heis 2} is analogous. We will use formulas \eqref{eqn:h to p}, which say that the $H$ operators are to the $P$ operators as complete symmetric functions are to power sum functions. Then let us place \eqref{eqn:one heis} into a generating series that goes over all $n \in \BN$:
\begin{equation}
\label{eqn:kc}
\sum_{n=0}^\infty A_m H_{-n} \left( z^n - z^{n+1} \right) = \sum_{n=0}^\infty \left( \left( \gamma z \right)^n - \left( \gamma z \right)^{n+1} \right)  H_{-n} A_m 
\end{equation}
If we write $H_-(z)$ for the power series \eqref{eqn:h to p} (with sign $\pm = -$), then \eqref{eqn:kc} reads:
\begin{equation}
\label{eqn:a and h series}
A_m H_-(z) (1 - z) = H_-\left(z \gamma\right) (1-\gamma z) A_m
\end{equation}
If $P$ is an operator $\km \rightarrow \kms$ which commutes with two line bundles $\ell$ and $\ell'$ (in the sense of Proposition \ref{prop:doesn't matter}, and the discussion after it), then:
\begin{equation}
\label{eqn:equivalence}
A \exp(P) \exp(\ell') \Big|_\Delta = \exp(P) \exp(\ell) \Big|_\Delta A \qquad \Leftrightarrow \qquad AP + A \ell' = P A + \ell A
\end{equation}
(this claim uses the associativity of the operation $x,y \leadsto xy|_\Delta$, as discussed in Subsection \ref{sub:action}). With this in mind, formula \eqref{eqn:a and h series} implies:
$$
A_m P_-(z) - \sum_{n=1}^\infty \frac {A_m}{n z^{-n}} = P_-\left(z \gamma\right) A_m - \sum_{n=1}^\infty \gamma^n \frac {A_m}{n z^{-n}}
$$
where $P_-(z) = \sum_{n=1}^\infty \frac {P_{-n}}{nz^{-n}}$. Extracting the coefficient of $z^n$ yields precisely \eqref{eqn:comm a heis 1}. 

\end{proof}

\subsection{} Having proved Lemma \ref{lem:commute heis}, we will now perform the analogous computations for the commutator of $A_m$ with the operators of Subsection \ref{sub:w action}: \\

\begin{lemma}
\label{lem:comm}

We have the following relations involving the \emph{Ext} operator $A_m$:
\begin{multline}
A_m L_n(y) - A_m L_{n-1}(y) = \label{eqn:one} \\
= L_n \left( \frac ym \right) A_m \cdot \gamma^n - L_{n-1} \left( \frac {yq}m \right) E \left( \frac {yq}m \right) A_m E \left( y \right)^{-1} \Big|_\Delta \cdot \gamma^{n-1} 
\end{multline}
and:
\begin{multline}
U_n \left(\frac {yq}m \right) A_m \cdot \gamma^{-n} -  U_{n-1} \left(\frac {yq}m \right) A_m \cdot q \gamma^{-n+1} = \label{eqn:two} \\
=  A_m U_n(y) - E\left(\frac {yq}m\right)^{-1}A_m E(yq) U_{n-1}(yq) \Big|_\Delta \cdot q 
\end{multline}
The two sides of equations \eqref{eqn:one} and \eqref{eqn:two} maps $\kmm$ to $\kms \left\llbracket y^{-1} \right \rrbracket$. The symbol $|_\Delta$ applied to any term that involves three of the series $L,E,U$ means that we restrict a certain operator $\kmm \rightarrow \kmsss \left\llbracket y^{-1} \right \rrbracket$ to the small diagonal. \\

\end{lemma}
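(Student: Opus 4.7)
The plan is to prove \eqref{eqn:one} (and then \eqref{eqn:two} by a parallel argument) by the same residue technique used in Lemma~\ref{lem:commute heis}, now upgraded to handle the rational factor $1/(1-y/\CL_n)$ that distinguishes $L_n(y)$ from $H_{-n}$. First I would interpret both $A_mL_n(y)$ and $L_n(y/m)A_m$ as push-forwards through the double-flag spaces appearing in diagrams \eqref{eqn:big diagram 1} and \eqref{eqn:big diagram 2}, producing classes $\Upsilon_n(y)$ and $\Upsilon'_n(y/m)$ on $\CM_c\times S\times \CM_{c'}$ that differ from the Heisenberg analogues only by the insertion of the extra weight $1/(1-y/\CL_n)$. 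Claim~\ref{claim:joe} pulls the overall prefactor $\wedge^\bullet(\CE_m)$ out of each push-forward, and Proposition~\ref{prop:corr y} (formulas \eqref{eqn:push one y}, \eqref{eqn:push two y}) expresses the residual push-forwards as nested contour integrals in $z_1,\ldots,z_n$, with $y$ now included in the pole cluster $\{0,y,\infty\}$.

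After the substitution $z_i\mapsto z_i/m$ in $\Upsilon'_n(y/m)$ -- which, via the identity already exploited in the proof of Lemma~\ref{lem:commute heis}, trades off an overall $\gamma^{-n}$ -- both $\Upsilon_n(y)$ and $\Upsilon'_n(y/m)\gamma^n$ are represented by integrals of the common integrand
$$\frac{I_n(z_1,\ldots,z_n)}{1 - y/z_n}\,\prod_{i=1}^n \frac{dz_i}{2\pi i\,z_i},$$
with $I_n$ as in \eqref{eqn:definition i}, differing only in whether $\{0,y,\infty\}$ sits inside or outside the $z_i$ contours. Their difference is thus a sum of residues of the integrand at $z_i\in\{0,y,\infty\}$. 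The chain factors $\prod(1-qz_{i+1}/z_i)^{-1}$ kill every residue except those at $z_1=\infty$, $z_n=0$ and $z_n=y$; moreover the new factor $1/(1-y/z_n)=z_n/(z_n-y)$ cancels the $1/z_n$ of the measure at $z_n=0$, so that residue also vanishes. The residue at $z_1=\infty$ is $I_{n-1}(z_2,\ldots,z_n)/(1-y/z_n)$, which upon integration reproduces the identity one step lower in $n$ and yields the contribution $A_mL_{n-1}(y)-L_{n-1}(y/m)A_m\gamma^{n-1}$.

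The residue at $z_n=y$ is $I_n(z_1,\ldots,z_{n-1},y)$, which by \eqref{eqn:definition i} factorizes as
$$I_{n-1}(z_1,\ldots,z_{n-1})\cdot\frac{\wedge^\bullet(\CU m/(yq))}{\wedge^\bullet(\CU'/y)}\cdot\frac{1}{(1-qy/z_{n-1})\prod_{i<n}\zeta(y/z_i)};$$
the single-variable piece is precisely $E(yq/m)\cdot E(y)^{-1}$, the factor $1/(1-qy/z_{n-1})$ is the defining pole of $L_{n-1}(yq/m)$ after its own $z\mapsto z/m$ rescaling, and the $\prod\zeta(y/z_i)^{-1}$ terms should match the integrand produced when the composition $L_{n-1}(yq/m)\,E(yq/m)\,A_m\,E(y)^{-1}\gamma^{n-1}$ is pushed through the double-flag space $\fZ^\bullet_{c,c-n+1}\times\CM_{c'}$ via Proposition~\ref{prop:corr y}. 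Once that identification is made, rearranging the residue decomposition produces \eqref{eqn:one}. Equation \eqref{eqn:two} follows from the same scheme applied to $U_n(y)$, using formula \eqref{eqn:push one y} and tracking the additional factors $\CQ^{-r}$ and $(-1)^{rn}u^n/q^{(r-1)n}$: the $z_i^{-r}$ coming from $\CQ^{-r}$ shifts the residue at $z_1=\infty$ by an extra $q^r$ (responsible for the prefactor $q$ in \eqref{eqn:two}), the change of variable now reads $z_i\mapsto z_iq/m$ producing $\gamma^{-n}$ rather than $\gamma^n$ as the overall factor, and the new residue now sits at $z_n=yq$, which is exactly where the $E(yq)$ and $E(yq/m)^{-1}$ insertions on the right-hand side of \eqref{eqn:two} come from. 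The main obstacle is precisely the $z_n=y$ (resp.\ $z_n=yq$) identification: verifying that the $\prod\zeta(y/z_i)^{-1}$ and $1/(1-qy/z_{n-1})$ byproducts of the residue genuinely coincide with the integrand that arises on the RHS via Proposition~\ref{prop:corr y} is the only genuinely new step of the entire argument, everything else being a routine adaptation of Lemma~\ref{lem:commute heis}.
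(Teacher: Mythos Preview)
Your strategy coincides with the paper's own proof: insert $(1-y/\CL_n)^{-1}$ into the classes $\Upsilon_n,\Upsilon'_n$ from Lemma~\ref{lem:commute heis}, invoke Proposition~\ref{prop:corr y} in place of Proposition~\ref{prop:corr}, and identify the difference of the two nested integrals with the residues at $z_1=\infty$ and $z_n=y$ (the residue at $z_n=0$ now being killed, as you note). The paper's argument is far terser than your outline: after setting $I_{n,y}=I_n/(1-y/z_n)$ it simply records the two nonzero residues and reads off the $\Upsilon$-identity encoding \eqref{eqn:one}, then repeats the computation for \eqref{eqn:two}.

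The one place you go beyond the paper is exactly the point you flag as the main obstacle, and there your proposed resolution does not work. You correctly observe that the literal residue $I_n(z_1,\ldots,z_{n-1},y)$ carries the factor $\prod_{i<n}\zeta(y/z_i)^{-1}$; the paper records that residue as $\frac{\wedge^\bullet(\CU m/(yq))}{\wedge^\bullet(\CU'/y)}\,I_{n-1,yq}(z_1,\ldots,z_{n-1})$ with those factors silently dropped. But your suggested fix---that the $\zeta$'s reappear when $L_{n-1}(yq/m)E(yq/m)A_mE(y)^{-1}\gamma^{n-1}$ is expressed through Proposition~\ref{prop:corr y}---is not correct: that composition is represented by $\Upsilon'_{n-1,yq/m}\cdot\frac{\wedge^\bullet(\CU m/(yq))}{\wedge^\bullet(\CU'/y)}\cdot\gamma^{n-1}$, and unwinding $\Upsilon'_{n-1,yq/m}$ via Proposition~\ref{prop:corr y} produces the integrand $I_{n-1,yq}$ with no $\zeta(y/z_i)$ insertions whatsoever. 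So the $\zeta$'s are not absorbed by the right-hand side in the way you hope; either the paper is tacitly relying on the contour prescription to make them integrate away, or both arguments share a genuine gap at precisely this step. One minor slip in your $U$-case sketch: the residue at $z_1=\infty$ carries a factor of $q$, not $q^r$, since the paper's $I_{n,y}$ for that case already includes the normalization $q^{-(r-1)n}$, which cancels most of the $q^{rn}$ from \eqref{eqn:definition i bis}.
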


\begin{proof} In order to prove \eqref{eqn:one}, we will closely follow the proof of Lemma \ref{lem:commute heis}. With the notation therein, one needs to replace \eqref{eqn:gamma 1} and \eqref{eqn:gamma 2} by:
\begin{align*}
&\Upsilon_{n,y} = \left( \Id \times p_S \times p_- \right)_* \left[\frac 1{1- \frac y{\CL_n}} \wedge^\bullet \left( (\Id \times p_+)^* \CE_{m} \right) \right] \\
&\Upsilon'_{n,y} = \left( p_+' \times p_S' \times \Id \right)_* \left[ \frac 1{1- \frac y{\CL_n}} \wedge^\bullet \left( (p'_- \times \Id)^* \CE_{m} \right) \right]
\end{align*}
This has the effect of inserting:
$$
\left(1 - \frac y{z_n} \right)^{-1}
$$
into the right-hand sides of formulas \eqref{eqn:gam 1} and \eqref{eqn:gam 2}. Therefore, the function $I_n(z_1,\dots,z_n)$ defined in \eqref{eqn:definition i} should be replaced by:
$$
I_{n,y}(z_1,\dots,z_n) = \frac {I_n(z_1,\dots,z_n)}{1- \frac y{z_n}}
$$
It is easy to see that the non-zero residues of $I_{n,y}$ are:
\begin{align*}
&\underset{z_1 = \infty}{\text{Res}} \frac {I_{n,y}(z_1,\dots,z_n)}{z_1} = - I_{n-1,y}(z_2,\dots,z_n) \\
&\underset{z_n = y}{\text{Res}} \frac {I_{n,y}(z_1,\dots,z_n)}{z_n} = \frac {\wedge^\bullet \left(\frac {\CU m}{yq} \right)}{\wedge^\bullet \left( \frac {\CU'}y \right)} \cdot \frac {I_{n-1,yq}(z_1,\dots,z_{n-1})}{\prod_{i=1}^{n-1} \zeta \left( \frac y{z_i} \right)}
\end{align*}
Therefore, the analogue of identity \eqref{eqn:nice} is:
$$
\Upsilon_{n,y} - \Upsilon_{n, \frac ym}' \cdot \gamma^n = \Upsilon_{n-1,y} - \Upsilon_{n-1, \frac {yq}m}' \cdot \gamma^{n-1} \frac {\wedge^\bullet \left(\frac {\CU m}{yq} \right)}{\wedge^\bullet \left( \frac {\CU'}y \right)}
$$
This equality of classes on $\CM_c \times S \times \CM_{c'}$ precisely underlies equality \eqref{eqn:one}. \\

\noindent As for \eqref{eqn:two}, we proceed analogously. One needs to replace \eqref{eqn:gamma 1 bis} and \eqref{eqn:gamma 2 bis} by:
\begin{align*}
&\Upsilon_n = \frac {(-1)^{rn} {u'}^n}{q^{(r-1)n}} \left(\Id \times p_S \times p_+ \right)_* \left[ \frac {\CQ^{-r}}{1- \frac y{\CL_n}} \cdot  \wedge^\bullet \left( (\Id \times p_-)^* \CE_{m} \right) \right] \\
&\Upsilon'_n = \frac {(-1)^{rn} u^n}{q^{(r-1)n}} \left(p_-' \times p_S' \times \Id \right)_* \left[  \frac {\CQ^{-r}}{1- \frac y{\CL_n}} \cdot \wedge^\bullet \left( (p'_+ \times \Id)^* \CE_{m} \right) \right]
\end{align*}
This has the effect of inserting:
$$
q^{n(1-r)}\left(1 - \frac y{z_n} \right)^{-1}
$$
into the right-hand sides of formulas \eqref{eqn:gam 1 bis} and \eqref{eqn:gam 2 bis}. Therefore, the function $I_n$ defined in \eqref{eqn:definition i bis} should be replaced by:
$$
I_{n,y}(z_1,\dots,z_n) = \frac {I_n(z_1,\dots,z_n)}{ q^{(r-1)n} \left(1- \frac y{z_n}\right)}
$$
It is easy to see that the non-zero residues of $I_{n,y}$ are:
\begin{align*}
&\underset{z_n = y}{\text{Res}} \frac {I_{n,y}(z_1,\dots,z_n)}{z_n} = q \frac {\wedge^\bullet \left( \frac {\CU'}{yq} \right)}{\wedge^\bullet \left(\frac {\CU m}{yq} \right)} \cdot \frac {I_{n-1,yq}(z_1,\dots,z_{n-1})}{\prod_{i=1}^{n-1} \zeta \left(\frac y{z_i} \right)} \\
&\underset{z_1 = \infty}{\text{Res}} \frac {I_{n,y}(z_1,\dots,z_n)}{z_1} = - q \cdot I_{n-1,y}(z_2,\dots,z_n) 
\end{align*}
Therefore, the analogue of identity \eqref{eqn:nice bis} is:
$$
\Upsilon_{n,y} - \Upsilon_{n, \frac {yq}m}' \cdot \gamma^{-n} = \Upsilon_{n-1,yq} \cdot q \frac {\wedge^\bullet \left(\frac {\CU'}{yq} \right)}{\wedge^\bullet \left( \frac {\CU m}{yq} \right)} - \Upsilon_{n-1, \frac {yq}m}' \cdot q \gamma^{-n+1}
$$
This equality of classes on $\CM_c \times S \times \CM_{c'}$ precisely underlies equality \eqref{eqn:two}.

\end{proof}

\subsection{}

In all formulas below, whenever one encounters a product of several $L$, $E$, $U$ operators, one needs to place the symbol $|_\Delta$ next to it, e.g. $L(\dots)E(\dots)U(\dots)|_\Delta$ as in \eqref{eqn:op w}. From now on, we will suppress the notation $|_\Delta$ from our formulas for brevity. \\

\begin{proof} \emph{of Theorem \ref{thm:main}:} In terms of the generating series \eqref{eqn:big series}, formulas \eqref{eqn:one} and \eqref{eqn:two} take the following form:
\begin{align*}
&\left(1 - x \right) A_m L(x,y)  = L \left( x \gamma , \frac ym \right) A_m - x L \left( x \gamma , \frac {yq}m \right) E \left(\frac {yq}m \right) A_m E(y)^{-1} \\
&U\left( x \gamma, \frac {yq}m \right) A_m \left(1 - \frac qx \right) = A_m U(x,y) - \frac qx E \left( \frac {yq}m \right)^{-1} A_m E(yq) U(x,yq)
\end{align*}
Change the variables $x \mapsto xq$, $y \mapsto y/q$ in the second equation, and multiply the first equation by $E(y)$ and the second equation by $E(y/m)$. Thus we obtain:
\begin{align*}
&\left(1 - x \right) A_m L(x,y) E(y) = \\
& \qquad \qquad = L \left( x \gamma ,  \frac ym \right) A_m E(y) - x L \left( x \gamma, \frac {yq}m \right) E \left(\frac {yq}m \right) A_m   \\
&E \left( \frac ym \right) U\left( xq \gamma , \frac ym\right) A_m \left(1 - \frac 1x \right) = \\
& \qquad \qquad = E \left( \frac ym \right) A_m U\left(xq, \frac {y}q \right) - \frac 1x A_m E(y) U\left(  xq,y \right)
\end{align*}
Now let us replace the variable $y$ by the symbol $yD_x$, where $D_x$ denotes the $q$-difference operator $D_x(f(x)) = f(xq)$. However, we make the following prescription: in the first equation above, the $D_x$'s are placed to the right of all $x$'s, while in the second equation, the $D_x$'s are placed to the left of all the $x$'s. We thus obtain:
\begin{align*}
&\left(1 - x \right) A_m L(x,yD_x) E(yD_x) = \\
& \qquad \qquad = L \left( x \gamma ,  \frac {yD_x}m \right) A_m E(yD_x) - x L \left( x \gamma, \frac {yD_xq}m \right) E \left(\frac {yD_xq}m \right) A_m   \\
&E \left( \frac {yD_x}m \right) U\left( xq \gamma , \frac {yD_x} m\right) A_m (1-x) = \\
& \qquad \qquad =  A_m E(yD_x) U\left(  xq,yD_x \right) - E \left( \frac {yD_x}m \right) A_m U\left(xq, \frac {yD_x}q \right) x
\end{align*}
Now let us multiply the first equation on the right by $U(qx,yD_x)$ (with the $D_x$'s placed to the left of all the $x$'s) and the second equation on the left by $L(x \gamma,yD_x/m)$ (with the $D_x$'s placed to the right of all the $x$'s):
\begin{align*}
&(1-x) A_m L(x,yD_x) E(yD_x) U(xq,yD_x) = \\
& \quad = L \left( x\gamma ,  \frac {yD_x}m \right) A_m E(yD_x) U(xq,yD_x) - x L \left( x \gamma, \frac {yD_xq}m \right) E \left(\frac {yD_xq}m \right) A_m U(xq,yD_x)  \\
&L \left(x \gamma,\frac {yD_x}m \right) E \left( \frac {yD_x}m \right) U\left( xq \gamma , \frac {yD_x} m\right) A_m (1-x) = \\
& \quad = L \left(x \gamma,\frac {yD_x}m \right) A_m E(yD_x) U\left(  xq,yD_x \right) - L \left(x \gamma,\frac {yD_x}m \right) E \left( \frac {yD_x}m \right) A_m U\left(xq, \frac {yD_x}q \right) x
\end{align*}
The two terms in the right-hand sides of the above equations are pairwise equal to each other (this is not manifestly obvious for the second term, because $y$ differs from $yq$, but this is a consequence of commuting $D_x$ past $x$). We conclude that:
\begin{multline*}
(1-x)A_m L(x,yD_x) E(yD_x) U(xq,yD_x) = \\ = L \left(x \gamma,\frac {yD_x}m \right) E \left( \frac {yD_x}m \right) U\left( xq \gamma , \frac {yD_x} m\right) A_m (1-x)
\end{multline*} 
Recalling the definition \eqref{eqn:op w series}, this implies 
$$
(1-x) A_m W(x,yD_x)  = W \left(x\gamma,\frac {yD_x}m \right) A_m (1-x) 
$$
Taking the coefficient of $(yD_x)^{-k}$ implies \eqref{eqn:comm a}. In doing so, the right-most factor $1-x$ changes into $1 - \frac x{q^k}$ due to the fact that the operators $\frac 1{D_x^{k}}$ must pass over it.

\end{proof}

\subsection{} Finally, we recall the operator $\Phi_m : \kmm \rightarrow \km$ defined in \eqref{eqn:phi}:
$$
\Phi_m = A_m \exp \left[\sum_{n=1}^\infty \frac {P_n}n \left\{ \frac {(q^n-1)q^{-rn}}{[n]_{q_1} [n]_{q_2}} \right\} \right]
$$
and let us translate \eqref{eqn:comm a}, \eqref{eqn:comm a heis 1}, \eqref{eqn:comm a heis 2} into commutation relations involving $\Phi_m$. \\

\begin{proof} \emph{of Corollary \ref{cor:main}:} Because $P_n$ commutes with $P_{n'}$ for all $n,n'>0$, \eqref{eqn:comm a heis 2} $\Rightarrow$ \eqref{eqn:comm phi heis} when the sign is $+$. Let us now prove \eqref{eqn:comm phi heis} when the sign is $-$. We write:
$$
\Phi_m = A_m \cdot \exp 
$$
where $\exp$ is shorthand for $\exp \left[\sum_{n=1}^\infty \frac {P_n}n \left\{ \frac {(q^n-1)q^{-rn}}{[n]_{q_1} [n]_{q_2}} \right\} \right]$. Then \eqref{eqn:comm a heis 1} reads:
$$
\Phi_m \cdot \exp^{-1} \cdot P_{-n} - P_{-n} \cdot \Phi_m \cdot \exp^{-1} \gamma^n = \Phi_m \cdot \exp^{-1}  (1 - \gamma^n)
$$
The relation above will establish \eqref{eqn:comm phi heis} for $\pm = -$ once we prove that:
\begin{equation}
\label{eqn:rus 0}
[\exp^{-1}, P_{-n}] = (1-q^{-rn}) \exp^{-1}
\end{equation}
If we take the logarithm of \eqref{eqn:rus 0}, it boils down to:
\begin{equation}
\label{eqn:rus}
\left[P_{-n}, \frac {P_n}n \left\{ \frac {(q^n-1)q^{-nr}}{[n]_{q_1} [n]_{q_2}} \right\} \right] = 1-q^{-rn}
\end{equation}
Relation \eqref{eqn:rus} is an equality of operators $\km \rightarrow \kms$ (the right-hand side denotes pull-back multiplied by $\proj_S^*(1-q^{-rn})$), and it is proved as follows. Take equality \eqref{eqn:q heis} of operators $\km \rightarrow \kmss$, multiply it by:
\begin{equation}
\label{eqn:class 3}
\text{proj}_3^* \left( \frac 1n \cdot\frac {(q^n-1)q^{-nr}}{[n]_{q_1} [n]_{q_2}}\right) \in K_{\CM \times S \times S}
\end{equation}
and then apply $\text{proj}_{12*}$ to the result (above, we write $\CM \times S \times S \xrightarrow{\text{proj}_{12}, \text{proj}_3} \CM \times S, S$ for the obvious projection maps). The outcome of this procedure is precisely \eqref{eqn:rus}. \\

\noindent Now let us prove \eqref{eqn:comm a} $\Rightarrow$ \eqref{eqn:comm phi}. To do so, we must take formula \eqref{eqn:w to p series} for $\pm = +$ (which is a priori an equality of operators $\km \rightarrow \kmss$), multiply it by \eqref{eqn:class 3} and then apply $\text{proj}_{12*}$ to the result. The resulting equality reads:
$$
\left[ W_k(x), \frac {P_n}n \left\{ \frac {(q^n-1)q^{-nr}}{[n]_{q_1} [n]_{q_2}} \right\} \right] = \frac {(1 - q^{-nk})x^n}n  W_k(x) 
$$
It is an easy exercise the show that $[W,P] = cW$ implies that $\exp(-P)W = \exp(c) \cdot W\exp(-P)$ as long as $c$ commutes with both $W$ and $P$. Therefore, we infer that:
\begin{align*} 
&\exp^{-1} W_k(x) = \exp\left[ \sum_{n=1}^{\infty} \frac {(1 - q^{-nk})x^n}n \right] W_k(x) \exp^{-1}  \Rightarrow \\
&\Rightarrow \ \exp^{-1} W_k(x) = \frac {1-\frac x{q^k}}{1-x} \cdot W_k(x) \exp^{-1} \Rightarrow \\
&\Rightarrow \ \Phi_m \exp^{-1} W_k(x) \cdot (1-x) = \Phi_m W_k(x) \exp^{-1} \cdot \left(1 - \frac x{q^k} \right)
\end{align*}
With this in mind, \eqref{eqn:comm a} and the fact that $\Phi_m \exp^{-1} = A_m$ imply that:
$$
m^kW_k(x\gamma) \Phi_m \exp^{-1} \cdot \left(1- \frac x{q^k} \right) = \Phi_m  W_k(x) \exp^{-1} \cdot \left(1- \frac x{q^k} \right) 
$$
Multiplying on the right with $\exp$ yields \eqref{eqn:comm phi}.

\end{proof}

\section{The Verma module}
\label{sec:verma}

\medskip

\subsection{}
\label{sub:verma intro}

Let us now specialize to $S = \BA^2$, and explain all the necessary modifications to the constructions in the present paper (we refer the reader to \cite[Section 3]{W} for details). From here on, let $\CM$ be the moduli space parameterizing rank $r$ torsion-free sheaves $\CF$ on $\BP^2$, together with a trivialization along a fixed line $\infty \subset \BP^2$:
$$
\CM = \Big\{\CF, \CF|_\infty \stackrel{\phi}\cong \CO_\infty^r \Big\}
$$
The $c_1$ of such sheaves is forced to be 0, but $c_2$ is free to vary over the non-negative integers, and so the moduli space breaks up into connected components as before: 
$$
\CM = \bigsqcup_{c = 0}^\infty \CM_c
$$
The space $\CM$ is acted on by the torus $T = \BC^* \times \BC^* \times (\BC^*)^r$, where the first two factors act by scaling $\BA^2$, and the latter $r$ factors act on the framing $\phi$. Note that:
$$
K^T_0(\pt) = \BZ[q_1^{\pm 1}, q_2^{\pm 1}, u_1^{\pm 1},\dots,u_r^{\pm 1}]
$$
where $q_1,q_2,u_1,\dots,u_r$ are the standard elementary characters of the torus $T$. We note that $q_1$ and $q_2$ are the equivariant weights of $\Omega_{\BA^2}^1$, and the determinant of the universal sheaf $\CU$ is the equivariant constant $u = u_1\dots u_r$. Consider the group:
$$
\km = \bigoplus_{c=0}^\infty K^T_0(\CM_c) \underset {\BZ[q_1^{\pm 1}, q_2^{\pm 1},u_1^{\pm 1},\dots,u_r^{\pm 1}]}{\otimes} \BQ(q_1,q_2,u_1,\dots,u_r)
$$
The main goal of \loccit was to define operators akin to \eqref{eqn:op w}, \eqref{eqn:op p 1} and \eqref{eqn:op p 2}:
\begin{equation}
\label{eqn:ops}
W_{n,k}, P_{\pm n'} : \km \rightarrow  \km
\end{equation}
for all $n \in \BZ$ and $k,n' \in \BN$, and then show that these operators satisfy the relations in the deformed $W$--algebra of type $\fgl_r$ (since $S = \BA^2$, $\km \cong \kms$ naturally). \\ 

\begin{definition}

(\cite[Definition 2.28]{W}) Let $q_1,q_2,u_1,\dots,u_r$ be formal symbols. The universal Verma module $M_{u_1,\dots,u_r}$ is the $\BQ(q_1,q_2,u_1,\dots,u_r)$-vector space with basis given by:
\begin{equation}
\label{eqn:basis}
W_{n_1,k_1} \dots W_{n_s,k_s} \vac
\end{equation}
as the pairs $(n_i,k_i)$ range over $-\BN \times \{1,\dots,r\}$ and are ordered in non-decreasing order of the slope $n_i/k_i$. We make $M_{u_1,\dots,u_r}$ into a deformed $W$--algebra module as follows. The action of an arbitrary generator $W_{n,k}$ on the basis vector \eqref{eqn:basis} is prescribed by the commutation relations \eqref{eqn:comm w}, together with the relations:
\begin{align*}
&W_{n,k} \vac = 0 & &\text{if } n > 0 \text{ or } k>r \\
&W_{0,k} \vac = e_k(u_1,\dots,u_r)\vac & &\text{for all }k
\end{align*} 
where $e_k$ denotes the $k$--th elementary symmetric polynomial. \\

\end{definition}

\begin{theorem} \label{thm:fock} (\cite[Theorem 3.12]{W}) We have an isomorphism of modules for the deformed $W$-algebra of type $\fgl_r$ (the action in the LHS is given by \eqref{eqn:ops}):
\begin{equation}
\label{eqn:iso fock}
\km \cong M_{u_1,\dots,u_r}
\end{equation}
induced by sending the $K$-theory class of the structure sheaf of $\CM_0 \subset \CM$ to $|\varnothing\rangle$. \\

\end{theorem}

\subsection{}
\label{sub:weak} 

The Ext (respectively vertex) operator $A_m$ (respectively $\Phi_m$) for $S = \BA^2$ was studied in \cite[Section 4]{W}, where we obtained an analogue of Theorem \ref{thm:main} in the case $k=1$ (some coefficients in the formulas of \loccit differ from those of the present paper, because their operator $A_m$ differs from ours by an equivariant constant). However, having only proved the case $k=1$ in \loccit led to weaker formulas than \eqref{eqn:comm a}. Thus, the present paper strengthens the results of \loccit (see Remark 4.8 therein). Specifically, Corollary \ref{cor:main} completely determines the operator $\Phi_m$ (hence also $A_m$) in the case $S = \BA^2$, due to Theorems \ref{thm:fock} and \ref{thm:unique}. \\

\begin{theorem} 
\label{thm:unique}

Given two Verma modules $M_{u_1,\dots,u_r}$ and $M_{u'_1,\dots,u'_r}$, there is a unique (up to constant multiple in $\BQ(q_1,q_2,u_1,\dots,u_r,u_1',\dots,u_r')$) linear map: 
$$
\Phi_m : M_{u_1',\dots,u_r'} \rightarrow M_{u_1,\dots,u_r}
$$
satisfying \eqref{eqn:comm phi} for all $k \geq 1$. \\

\end{theorem}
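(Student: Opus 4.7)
\emph{Plan.} The strategy is to derive Theorem \ref{thm:unique} from the commutation relation \eqref{eqn:comm phi} in two steps: (A) the map $\Phi_m$ is entirely determined by the single vector $v_0 := \Phi_m \vac$ in the level-completion of $M_{u_1,\ldots,u_r}$, and (B) $v_0$ is pinned down up to a scalar in $\BQ(q_1,q_2,u_1,\ldots,u_r,u_1',\ldots,u_r')$. Together (A) and (B) give the theorem, because any $\Phi_m$ with $v_0 = 0$ would then vanish on every basis vector.

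\emph{Step (A).} Equating coefficients of $x^{-n}$ on both sides of \eqref{eqn:comm phi}, and writing $D_n := \Phi_m W_{n,k} - m^k \gamma^{-n} W_{n,k} \Phi_m$, the factor $(1 - x/q^k)$ forces the recursion $D_{n+1} = q^k D_n$, so $D_n = q^{kn} D_0$ for all $n \in \BZ$. Applied to an arbitrary basis vector $v = W_{n_1,k_1}\ldots W_{n_s,k_s}\vac$ of $M_{u_1',\ldots,u_r'}$ and taking $n < 0$, this yields
\[
\Phi_m(W_{n,k} v) = m^k \gamma^{-n} W_{n,k}(\Phi_m v) + q^{kn}\bigl(\Phi_m(W_{0,k} v) - m^k W_{0,k}(\Phi_m v)\bigr).
\]
Since $W_{0,k}$ preserves the level grading and $n<0$, every term on the right involves $\Phi_m$ on vectors of strictly smaller level than $W_{n,k} v$. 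Induction on the level then reduces $\Phi_m$ on any basis vector to the single value $v_0$.

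\emph{Step (B).} Applying the same identity at $v = \vac$ and $n \geq 1$, using $W_{n,k}\vac = 0$ and $W_{0,k}\vac = e_k(u_1',\ldots,u_r')\vac$, one obtains the Whittaker-type constraint
\[
W_{n,k}\, v_0 \;=\; (q^k \gamma)^n \bigl(W_{0,k}\, v_0 - m^{-k} e_k(u_1',\ldots,u_r')\, v_0\bigr), \quad n \geq 1,\ 1 \leq k \leq r.
\]
If $v_0,\tilde v_0$ both satisfy this, rescale $\tilde v_0$ so that their level-zero components coincide (possible because $(M_u)_0$ is one-dimensional) and set $\Delta v := v_0 - \tilde v_0$; by linearity $\Delta v$ satisfies the same constraint, now with trivial level-zero part. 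Induct on the level $d$: assuming $(\Delta v)_e = 0$ for all $e < d$, extract the level-$(d-n)$ component of the constraint for $1 \leq n \leq d$ to conclude $W_{n,k}(\Delta v)_d = 0$, while the cases $n > d$ are automatic since $W_{n,k}$ lowers level by $n$. Hence $(\Delta v)_d$ is a singular vector at positive level, which must vanish by generic irreducibility of $M_{u_1,\ldots,u_r}$.

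\emph{Main obstacle.} The crux is this last invocation: generic irreducibility of the universal Verma module of $\CA_r$, equivalently non-degeneracy of its Shapovalov pairing. In the present setting it can be deduced from the geometric realization $M_u \cong K_{\CM}$ of Theorem \ref{thm:fock}: torus-fixed-point classes diagonalize a maximal commutative subalgebra of $\CA_r$ with pairwise distinct eigenvalues at each level for generic $u_i$, forcing the module to be simple. With that fact in hand, steps (A) and (B) combine to complete the proof.
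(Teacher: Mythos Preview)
Your argument is correct and follows a genuinely different route from the paper's. The paper argues dually: assuming $\langle\emptyset|\Phi_m|\emptyset\rangle = 0$, it shows that every Shapovalov matrix element
\[
\langle\emptyset|\,W_{-n_s,k_s}\cdots W_{-n_1,k_1}\,\Phi_m\,W_{n_1',k_1'}\cdots W_{n_t',k_t'}\,|\emptyset\rangle
\]
vanishes, by induction on the total degree $-\sum n_i - \sum n_j'$. The inductive step pushes one $W_{n',k'}$ through $\Phi_m$ via the same recursion you derive, but then relies on a PBW-type straightening lemma from \cite{W surf} to reorder the resulting products of $W$'s back into the standard basis form. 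Your approach works primally instead: Step (A) determines $\Phi_m$ on each basis vector directly from its values at lower level using $D_{n+1}=q^kD_n$, and Step (B) characterizes $v_0$ by a Whittaker-type constraint whose solution space is one-dimensional by the absence of positive-level singular vectors. Both arguments ultimately rest on non-degeneracy of the Shapovalov form (equivalently, generic irreducibility of $M_{u_1,\dots,u_r}$): the paper uses this implicitly to pass from ``all matrix elements vanish'' to ``$\Phi_m=0$'', whereas you invoke it explicitly and sketch a geometric justification via Theorem \ref{thm:fock}. What you gain is independence from the straightening result of \cite{W surf}; what you pay is having to work in the level-completion of the target and to supply the irreducibility input separately.
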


\begin{proof} The existence of such a linear map follows from the very fact that the operator \eqref{eqn:phi} satisfies \eqref{eqn:comm phi}. To show uniqueness, it is enough to prove $\langle \varnothing | \Phi_m | \varnothing \rangle = 0$ implies $\Phi_m = 0$, for any operator that satisfies the following relations for all $n$, $k$:
\begin{equation}
\label{eqn:comm phi foil}
\Phi_m W_{n,k} - \Phi_m W_{n+1,k} \cdot q^{-k} = W_{n,k} \Phi_m \cdot m^k \gamma^{-nk} - W_{n+1,k} \Phi_m \cdot \frac {m^k}{q^k} \gamma^{-(n+1)k}
\end{equation}
where $m$ and $\gamma$ are certain non-zero constants. \\

\begin{claim}
\label{claim:shapovalov}

For any parameters $u_1,\dots,u_r$, there exists a non-degenerate pairing:
$$
M_{u_1,\dots,u_r} \otimes M_{u_1,\dots,u_r} \xrightarrow{\langle \cdot, \cdot \rangle} \BQ(q_1,q_2,u_1,\dots,u_r)
$$
such that the adjoint of $W_{n,k}$ is $W_{-n,k}$, for all $n \in \BZ$ and $k \in  \BN$. \\

\end{claim}

\begin{proof} Using Theorem \ref{thm:fock}, the required pairing is provided by the equivariant Euler characteristic pairing on $K_{\CM}$ (renormalized as in \cite[Section 3.14]{W}). The operators $W_{n,k}$ and $W_{-n,k}$ are adjoint with respect to this pairing (\cite[formula (3.39)]{W}). 

\end{proof}

\noindent Let us now complete the proof of Theorem \ref{thm:unique}. Because Verma modules are generated by $W_{n,k}$ acting on $\varnothing$, then we must show that $\langle \varnothing |\Phi_m|\varnothing \rangle = 0$ implies:
\begin{equation}
\label{eqn:adam}
\langle \varnothing |W_{-n_s,k_s}\dots W_{-n_1,k_1} \Phi_m W_{n_1',k_1'}\dots W_{n_t',k_t'}| \varnothing \rangle = 0
\end{equation}
for all collections of indices $(n_i,k_i), (n_i',k_i') \in \BZ_{\leq 0} \times \{1,\dots,r\}$, ordered by slope:
$$
\frac {n_1}{k_1} \leq \dots \leq \frac {n_s}{k_s}, \qquad \frac {n_1'}{k_1'} \leq \dots \leq \frac {n_t'}{k_t'}
$$
The matrix coefficient \eqref{eqn:adam} is non-zero only if the $n_i$'s and $n'_j$'s are all non-positive, so we will prove formula \eqref{eqn:adam} by induction on the non-positive integer $\delta = \sum n_i + \sum n_i'$. We may assume that $n_s, n_t' < 0$ because $W_{0,k}|\varnothing\rangle$ is a multiple of $|\varnothing\rangle$ for any $k$. The base case $\delta = 0$ of the induction is simply the assumption $\langle \varnothing |\Phi_m|\varnothing \rangle = 0$. As for the induction step, let us iterate relation \eqref{eqn:comm phi foil} to obtain:
$$
\Phi_m W_{n_1',k_1'} \dots W_{n_t',k_t'}\in \text{span} \begin{cases} \Phi_m W_{n_1'+\varepsilon_1,k_1'} \dots W_{n_t'+\varepsilon_t,k_t'} \\ W_{n_1'+\varepsilon'_1,k_1'} \dots W_{n_t'+\varepsilon'_t,k_t'} \Phi_m \end{cases}
$$
where $\varepsilon_1,\dots,\varepsilon_t \in \{0,1\}$ are not all 0, and $\varepsilon_1',\dots,\varepsilon_t' \in \{0,1\}$. That means that the left-hand side of \eqref{eqn:adam} is a linear combination of:
$$
\langle \varnothing |W_{-n_s,k_s}\dots W_{-n_1,k_1} \Phi_m W_{n_1'+\varepsilon_1,k_1'}\dots W_{n_t'+\varepsilon_t,k_t'}| \varnothing \rangle 
$$
(which is 0 by the induction hypothesis, because the $\varepsilon_i$'s are not all 0) and: 
\begin{equation}
\label{eqn:smith}
\langle \varnothing |W_{-n_s,k_s}\dots W_{-n_1,k_1} W_{n_1'+\varepsilon'_1,k_1'} \dots W_{n_t'+\varepsilon'_t,k_t'} \Phi_m| \varnothing \rangle 
\end{equation}
The induction step will be complete once we show that \eqref{eqn:smith} is 0. As a consequence of \eqref{eqn:comm w}, the product of $W$'s in \eqref{eqn:smith} can be written as a linear combination of:
$$
W_{-n_r'',k_r''}\dots W_{-n_1'',k_1''} \quad \text{with} \quad \frac {n_1''}{k_1''} \leq \dots \leq \frac {n_r''}{k_r''}
$$
and $\sum n_i'' = \sum n_i - \sum n_i' - \sum \varepsilon_i'$ for degree reasons. If $n_r'' > 0$, then the product of $W$'s above annihilates $\langle \varnothing|$. Thus, we may assume $n_r'' \leq 0$, in which case the fact that: 
$$
\sum n_i'' = \sum n_i - \sum n_i' - \sum \varepsilon_i' > \sum n_i + \sum n_i'
$$
(recall that $n_i' < 0$ by assumption, while $\varepsilon_i' \in \{0,1\}$) means that we can apply the induction hypothesis to conclude that \eqref{eqn:smith} is 0.

\end{proof}

\noindent We note that the identification of $A_m$ (in the case $S = \BA^2$) with a vertex operator was also achieved in \cite{BFMZZ}, who computed relations \eqref{eqn:one} and \eqref{eqn:two} for $n=1$ in the basis of fixed points. This uniquely determines the operator $A_m$ due to certain features of the Ding-Iohara-Miki algebra, but does not directly establish the connection with the generating currents of the deformed $W$--algebra of $\fgl_r$. From a geometric point of view, this is because the Nakajima-type simple correspondences only describe the operators $L_{1,k}$ and $U_{1,k}$. As we have seen in Subsection \ref{sub:basic mod}, in order to define the operators $L_{n,k}$ and $U_{n,k}$ for all $n$ (with the ultimate goal of defining the $W$--algebra generators $W_{n,k}$ in \eqref{eqn:op w}), one needs to introduce the more complicated correspondences \eqref{eqn:zn}.

\end{document}